\definecolor {processblue}{cmyk}{0.96,0,0,0}
\definecolor {processblue}{cmyk}{0.96,0,0,0}
\theoremstyle{plain}
\newtheorem{thm}{Theorem}
\newtheorem{lem}[thm]{Lemma}
\newtheorem{defi}[thm]{Definition}
\newtheorem{prop}[thm]{Proposition}
\newtheorem{quest}{Question}
\newtheorem{claim}{Claim}
\newcommand{\se}{\subseteq}
\newcommand{\imp}{\rightarrow}
\newcommand{\co}{{\mathcal O}}
\numberwithin{equation}{section}
\begin{document}

\title{On uniquely packable trees}

\author[$1$,$4$]{A. Alochukwu\footnote{Financial support by the DSI-NRF Centre of Excellence in
Mathematical and Statistical Sciences (CoE-MaSS), South Africa is gratefully acknowledged.}}
\author[$2$]{M. Dorfling}
\author[$3$,$4$]{E. Jonck}
\affil[$1$]{Department of Mathematics, Computer Science and Physics, Albany State University, USA}
\affil[$2$]{Department of Mathematics and Applied Mathematics, University of Johannesburg, South Africa}
\affil[$3$]{School of Mathematics, University of the Witwatersrand, South Africa}
\affil[$4$]{DSI-NRF Centre of Excellence in
Mathematical and Statistical Sciences (CoE-MaSS), South Africa.}
\date{}
\maketitle

\begin{abstract}

An $i$-packing in a graph $G$ is a set of vertices that are pairwise at distance more than $i$. A \emph{packing colouring} of $G$ is a partition $X=\{X_{1},X_{2},\ldots,X_{k}\}$ of $V(G)$ such that
each colour class $X_{i}$ is an $i$-packing. The minimum order $k$ of a packing colouring is called the
packing chromatic number of $G$, denoted by $\chi_{\rho}(G)$. In this paper we investigate
the existence of trees $T$ for which there is only one packing colouring using $\chi_\rho(T)$ colours.
For the case $\chi_\rho(T)=3$, we completely characterise all such trees. As a by-product we obtain sets of uniquely $3$-$\chi_\rho$-packable trees with monotone $\chi_{\rho}$-colouring and non-monotone $\chi_{\rho}$-colouring respectively.
\end{abstract}

\textit{Keywords: colouring, broadcast, packing, tree, uniquely colourable, monotone colouring, packing chromatic number}

\textit{2020 MSC: 05C15, 05C70}

\section{Introduction}

Packing colourings were inspired by a frequency assignment problem in
broadcasting. The distance between broadcasting stations is directly related
to the frequency they may receive, since two stations may only be assigned
the same frequency if they are located far enough apart for their
frequencies not to interfere with each other. This colouring was first
introduced by Goddard et al.~\cite{ghhhr} where it was called \emph{broadcast colouring}.
 Bre\v{s}ar et al.~\cite{bkr} were the first to use the
term packing colouring.

Let $G=(V(G),E(G))$ be a simple graph of order $n$ and let $i$ be a positive
integer. A set $X \subseteq V(G)$ is called an \emph{$i$-packing} if vertices in $X$
are pairwise distance more than $i$ apart. A \emph{packing colouring}
of $G$ is a partition $X=\{X_{1},X_{2},\ldots,X_{k}\}$ of $V(G)$ such that
each colour class $X_{i}$ is an $i$-packing. Hence, two vertices may be
assigned the same colour if the distance between them is greater than the
colour. The minimum order $k$ of a packing colouring is called the
\emph{packing chromatic number} of $G$, denoted by $\chi_{\rho}(G)$.

Note that every packing colouring is a proper colouring. For terms and concepts not
defined here, see~\cite{clz}.

Goddard et al. \cite{ghhhr} investigated among other things, the packing
chromatic number of paths, trees and the infinite square lattice, $Z^{2}$.
They found that for a tree of diameter two (that is, a star) the packing
chromatic number is $2$; for a tree of diameter three the packing chromatic
number is $3$, and for a tree of diameter $4$, they gave an explicit formula
based on the number of large neighbors (degree $4$ or more) and small
neighbors (degree $3$ or less) of the central vertex. They proved also that
for all trees $T$ of order $n$ it holds that $\chi _{\rho }(T)\leq \frac{n+7}{4}$,
except when $n=4$ or $n=8$ (when the bound is $\frac{1}{4}$ more) and
these bounds are sharp. Furthermore they proved that the minimum order of a
tree $T$ with $\chi _{\rho }(T)=2$, is 2; for a tree $T$ with $\chi _{\rho}(T)=3$,
the minimum order is $4$ and for a tree with $\chi _{\rho }(T)=4$,
the minimum order is $8$.

The packing chromatic number of lattices, trees, and Cartesian products in
general is also considered in~\cite{bkr, bkrS,c, efhl, fkl, fr}
and~\cite{hs}. Determining the packing chromatic number is considered to be difficult. Finding $\chi _{\rho }$ for general graphs is NP-hard \cite{ghhhr}, and in fact, deciding whether $\chi _{\rho }(G)\leq 4$ is already NP-complete. In~\cite{fg}, Fiala and Golovach showed that the decision whether a tree allows a packing colouring with at most $k$ classes is NP-complete.

Bre\v{s}ar, Klav\v{z}ar
and Rall introduced monotone colourings in~\cite{bkr}. Let $G$ be a graph.
For a colouring $c:V(G)\rightarrow \{1,\ldots,k\}$, and a colour~$l$, $1\leq
l\leq k$, we denote by $c_l$ the cardinality of the class of vertices coloured by $l$. A $\chi _{\rho }(G)$-colouring is monotone if $c_{1}\geq
c_{2}\geq\cdots\geq c_{k}$. They proved that for any graph $G$ and any $l$, where
$l\leq \left\lfloor \frac{\chi _{\rho }(G)}{2}\right\rfloor$, there exists a 
$\chi _{\rho }(G)$-colouring $c:V(G)\rightarrow \{1,...,k\}$ such that 
$c_l\geq c_j$ for all $j\geq 2l$. Note that this implies that for any
graph $G$ there exists an optimal colouring in which $c_{1}\geq c_{i}$ for
$2\leq i\leq k$. They also showed however, that there exists a class of trees
$T_{k}$, $k\geq 2$, in which no optimal colouring is monotone. The authors
proved that for any $k\geq 2$, $\chi _{\rho }(T_{k})=3$ and moreover, there
exists a {\em unique} optimal colouring of $T_{k}$ with $c_{1}=k+5$, $c_{2}=2$ and
$c_{3}=k+1$. In particular, $c_{3}>c_{2}.$ 

Several researchers have also investigated related topics on the packing chromatic number and packing colourings for specific graph types, see \cite{ bkr2, bkr4, Balo1, Balo2, Ekht, Fres, Tog} for results on cubic graphs, subdivided plane graphs, Petersen graphs, Moore graphs, subcubic outerplanar graphs etc. In addition, an integer linear programming and a satisfiability test model for the packing colouring problem of graphs were developed in \cite{za}. The proposed models in \cite{za} outperforms other exact methods such as a back-tracking and dynamic algorithm. 

Given the volume of publications that have been written on the packing chromatic number and its growing interest, we refer the reader to the survey article  \cite{bkr3} on packing colourings by Bre\v{s}ar et al. for more details.

We call a graph $G$ uniquely $k$-$\chi_\rho$-packable if $\chi_\rho(G) = k$ and
$G$ has a unique packing colouring of order $k$.
By uniqueness of a packing colouring we mean uniqueness up to identity. In other words, we work with labelled graphs. For instance, $K_2$ has packing chromatic number 2, but is not uniquely 2-$\chi_\rho$-packable.

The following terminology is used throughout: Given a graph $G$ and a colouring of $V(G)$,
an {\em $i$-vertex} is a vertex of colour~$i$. We similarly use the terms {\em $i$-neighbour} and
{\em $i$-leaf}. We also use these terms if $G$ is uniquely colourable, even if no colouring is
specified.

In this paper we investigate uniquely-packable trees. We characterise the uniquely $3$-$\chi_\rho$-packable trees, and investigate the existence of uniquely $k$-$\chi_\rho$-packable trees for $k>3$. Our characterisation is constructive. We proved that a tree is uniquely $3$-$\chi_\rho$-packable if and only if it can be constructed from one of the three trees described in Figure \ref{f1f2f3} by iteratively applying some finite sequence of operations as described in \ref{Char3chi}. Furthermore, we showed that the monotonicity of the packing colourings can also be determined by performing the same operations. As a by-product of our investigation, we obtain sets of uniquely $3$-$\chi_\rho$-packable trees with monotone $\chi_{\rho}$-colouring and non-monotone $\chi_{\rho}$-colouring respectively.

The remainder of this paper is organised as follows. In Section \ref{Sec2} we characterise those trees that are uniquely $3$-$\chi_\rho$-packable and describe all operations that would be useful when constructing uniquely $3$-$\chi_\rho$-packable trees. Section \ref{Sec3} investigates the monotonicity of the packing colourings, by considering the three graphs, from which all uniquely $3$-$\chi_\rho$-packable trees are constructed and show that their colourings are monotone. This leads us to establishing sets of uniquely $3$-$\chi_\rho$-packable trees with monotone $\chi_{\rho}$-colouring and non-monotone $\chi_{\rho}$-colouring respectively. In the concluding
Section \ref{Sec4} we considered the existence of uniquely $k$-$\chi_\rho$-packable trees for $k > 3$ and present a way to construct such trees for $4 \leq k \leq 7$.

\section{Uniquely $3$-$\chi_\rho$-packable trees}\label{Sec2}

In this section we characterise those trees that are uniquely $3$-$\chi_\rho$-packable.
The special case of Claim~\ref{col1} with $k=3$  and Lemma~\ref{threetwoone} will be used repeatedly throughout this section.

\begin{claim}\label{col1}
If $c$ is a $k$-$\chi_\rho$-packing of a graph $G$, then every 1-vertex $v$ has degree at most
$k-1$.
\end{claim}
\begin{proof}
Since no two neighbours of $v$ can have the same colour because of the distance restraint between vertices of the same colour, $v$ is adjacent to at most $k-1$ vertices. 
\end{proof}
\begin{lem}\label{threetwoone}
If $c$ is a $3$-$\chi_\rho$-packing of a graph $G$ with a 2-vertex $x$ adjacent to a 3-vertex $y$,
then all neighbours of $x$ other than $y$ have colour~1 and these vertices have no neighbours
other than $x$. 
\end{lem}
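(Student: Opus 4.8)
The plan is to carry out a short forbidden-colour analysis on a bounded neighbourhood of $x$, using only the defining property of a packing colouring — the class of colour $i$ is an $i$-packing, so any two $i$-vertices lie at distance more than $i$ — together with crude distance upper bounds obtained by concatenating edges through $x$.

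First I would fix an arbitrary neighbour $z$ of $x$ with $z\neq y$ and show that $z$ is a $1$-vertex. It cannot be a $2$-vertex, since it is adjacent to the $2$-vertex $x$ and distinct $2$-vertices must be more than $2$ apart; and it cannot be a $3$-vertex, since $z$ and $y$ are both adjacent to $x$, so $d(z,y)\leq 2$, contradicting that the two $3$-vertices $z$ and $y$ would have to be more than $3$ apart. Hence $c(z)=1$, which is the first assertion of the lemma.

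Next I would show that such a $z$ has no neighbour besides $x$. Suppose, for contradiction, that $w$ is a neighbour of $z$ with $w\neq x$; since $G$ is a tree, the edges $xy$ and $xz$ force $z\not\sim y$, so also $w\neq y$. Then $w$ cannot be a $1$-vertex (it is adjacent to the $1$-vertex $z$); it cannot be a $2$-vertex, because $w$ is adjacent to $z$ and $z$ is adjacent to $x$, so $d(w,x)\leq 2$ while $x$ is a $2$-vertex; and it cannot be a $3$-vertex, because $w,z,x,y$ is a walk of length $3$, so $d(w,y)\leq 3$ while $y$ is a $3$-vertex. As $c(w)\in\{1,2,3\}$, this is a contradiction, so $x$ is the only neighbour of $z$, which completes the proof.

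I do not expect a genuine obstacle — the whole argument is a finite case check on vertices at distance at most $2$ from $x$. The one point needing care is the last case: the bound $d(w,y)\leq 3$ contradicts the $3$-packing condition only when $w$ and $y$ are \emph{distinct} $3$-vertices, so one must rule out $w=y$, i.e.\ $z\sim y$. This is where acyclicity of $G$ enters, and indeed the conclusion genuinely fails without it (in $K_3$, with $x$ the $2$-vertex, $y$ the $3$-vertex and $z$ the $1$-vertex, $z$ is adjacent to $y$), so the statement should be read in the tree setting of this section. It is also worth double-checking that each inequality is tight enough: colour $i$ forbids distance at most $i$ between two $i$-vertices, and in each case we exhibit precisely such a pair.
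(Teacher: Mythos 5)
Your proof is correct and is essentially the same forbidden-colour/distance argument that the paper gives, just spelled out in more detail (the paper's version is two sentences: colours $2$ and $3$ are excluded for neighbours of $x$ by the distance constraints, and a further neighbour of a $1$-neighbour of $x$ could not be coloured at all). Your side remark is also right and worth keeping: as literally stated for an arbitrary graph $G$ the second conclusion fails --- in $K_3$ coloured $2,3,1$ the $1$-vertex is adjacent to the $3$-vertex --- so the lemma must be read in the tree setting in which the paper applies it, a point the paper's own proof glosses over when it speaks of a ``new'' neighbour.
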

\begin{proof}
Clearly, vertices of $G$ can only be coloured with colours $1$, $2$ or $3$ since $c$ is a $3$-$\chi_\rho$-packing of the graph $G$. As a result, all neighbours of $x$ other than $y$ can only have colour~1  because colour~2 and colour~3 are not possible due to the distance constraint in the definition of a packing colouring. 
In addition, observe that a 1-neighbour of the 2-vertex $x$ does not have any new neighbour since since such a vertex can't be coloured in a $3$-$\chi_\rho$-packing of $G$.
\end{proof}

\subsection{\textbf{Characterisation of uniquely $3$-$\chi_\rho$-packable trees}}\label{Char3chi}
Our characterisation is constructive. We start with one of the coloured trees depicted in Figure~\ref{f1f2f3}, and iteratively apply one of the operations described below and depicted in Figure~\ref{ops}. This produces
all uniquely $3$-$\chi_\rho$-packable trees, and only such trees, with the unique 3-colouring assigned.

\begin{figure}[ht]
\centering
\includegraphics[scale=0.8]{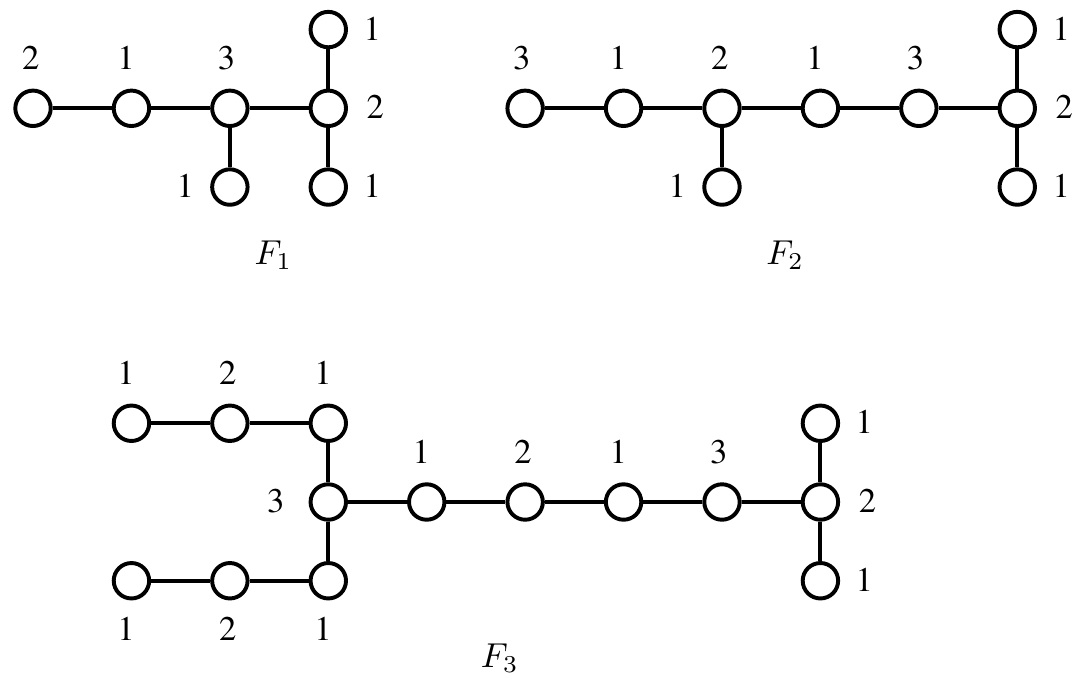}
\caption{The three graphs from which all uniquely 3-$\chi_\rho$-packable trees are constructed.}
\label{f1f2f3}
\end{figure}

\begin{description}
\item[$\co_1$] Attach a new $1$-vertex to a $2$-vertex. 
\item[$\co_2$] Attach a new $1$-vertex to a $3$-vertex that has a $2$-neighbour.
\item[$\co_3$] Attach a new $2$-vertex to a $1$-vertex that has no $2$-neighbours.
\item[$\co_4$] Attach a new $3$-vertex to a $1$-vertex that is at distance at least 3 from all $3$-vertices.
\item[$\co_5$] Let $u$ be a 3-vertex. Add a path $P:v_1,v_2,v_3$, and the edge $uv_1$.
Give $v_1$, $v_2$ and $v_3$ the colours~1, 2 and 1, respectively.

Note that $\co_{5}$ is a combination of $\co_{2}, \co_{3}$ and $\co_{1}$ if $u$ has a $2$-neighbour.
\item[$\co_6$]  Let $u$ be a 3-vertex with no 2-neighbour. Add a path $P:v_1,v_2,v_3$, and the edge $uv_2$.
Give $v_1$, $v_2$, and $v_3$ the colours~1, 2 and 1, respectively.
\item[$\co_7$] Replace an edge $uv$, where $c(u)=3$, $c(v)=1$, and $\deg(u)=2$, with a path
$u, w_1, w_2, w_3, w_4,v$. Assign the colours~1,2,1 and 3 to $w_1,w_2,w_3$, and $w_4$, respectively. 
\end{description}

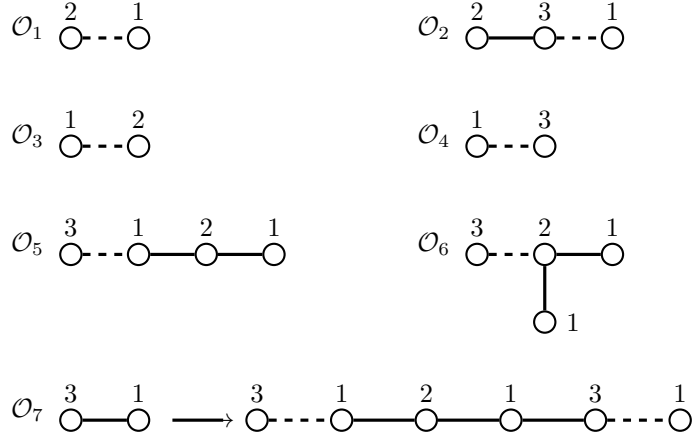
\begin{figure}[H]
\begin{center}
\begin{tikzpicture}
  [scale=0.45,inner sep=1mm, 
   vertex/.style={circle,thick,draw}, 
   thickedge/.style={line width=2pt}] 
    \begin{scope}[>=triangle 45]
     
     \node[vertex]  (a1) at (-17,2.2)  {};
     \node[vertex]  (b1) at (-15,2.2)  {};
     \node[right] at (-19,2.5) {$\co_{1}$};
     \node[above] at (-17,2.5) {$2$};
       \node[above] at (-15,2.5) {$1$};
       
        \node[vertex]  (a2) at (-5,2.2)  {};
     \node[vertex]  (b2) at (-3,2.2)  {};
     \node[vertex]  (c2) at (-1,2.2)  {};
     \node[right] at (-7,2.5) {$\co_{2}$};
      \node[above] at (-5,2.5) {$2$};
       \node[above] at (-3,2.5) {$3$};
      \node[above] at (-1,2.5) {$1$};

       \node[vertex]  (a3) at (-17,-1)  {};
     \node[vertex]  (b3) at (-15,-1)  {};
     \node[right] at (-19,-0.7) {$\co_{3}$};
     \node[above] at (-17,-0.7) {$1$};
       \node[above] at (-15,-0.7) {$2$};
       
        \node[vertex]  (a4) at (-5,-1)  {};
     \node[vertex]  (b4) at (-3,-1)  {};
     \node[right] at (-7,-0.7) {$\co_{4}$};
      \node[above] at (-5,-0.7) {$1$};
       \node[above] at (-3,-0.7) {$3$};
       
        \node[vertex]  (a5) at (-17,-4.2)  {};
     \node[vertex]  (b5) at (-15,-4.2)  {};
       \node[vertex]  (c5) at (-13,-4.2)  {};
     \node[vertex]  (d5) at (-11,-4.2)  {};
     \node[right] at (-19,-3.9) {$\co_{5}$};
     \node[above] at (-17,-3.9) {$3$};
       \node[above] at (-15,-3.9) {$1$};
        \node[above] at (-13,-3.9) {$2$};
         \node[above] at (-11,-3.9) {$1$};
       
        \node[vertex]  (a6) at (-5,-4.2)  {};
     \node[vertex]  (b6) at (-3,-4.2)  {};
     \node[vertex]  (c6) at (-1,-4.2)  {};
     \node[vertex]  (d6) at (-3,-6.2)  {};
     \node[right] at (-7,-3.9) {$\co_{6}$};
      \node[above] at (-5,-3.9) {$3$};
       \node[above] at (-3,-3.9) {$2$};
      \node[above] at (-1,-3.9) {$1$};
       \node[right] at (-2.6,-6.2) {$1$};
       
        \node[vertex]  (a7) at (-17,-9.1)  {};
     \node[vertex]  (b7) at (-15,-9.1)  {};
     \node[right] at (-13.18,-9.15) {$\rightarrow$};
    \draw[black, very thick] (-14,-9.1)--(-12.5,-9.1);     
       \node[vertex]  (c7) at (-11.5,-9.1)  {};
     \node[vertex]  (d7) at (-9,-9.1)  {};
        \node[vertex]  (e7) at (-6.5,-9.1)  {};
     \node[vertex]  (f7) at (-4,-9.1)  {};
     \node[vertex]  (g7) at (-1.5,-9.1)  {};
     \node[vertex]  (h7) at (1,-9.1)  {};
     
     \node[right] at (-19,-8.8) {$\co_{7}$};
     \node[above] at (-17,-8.8) {$3$};
       \node[above] at (-15,-8.8) {$1$};
        \node[above] at (-11.5,-8.8) {$3$};
         \node[above] at (-9,-8.8) {$1$};
     \node[above] at (-6.5,-8.8) {$2$};
     \node[above] at (-4,-8.8) {$1$};
       \node[above] at (-1.5,-8.8) {$3$};
        \node[above] at (1,-8.8) {$1$};

    \draw[black, very thick, dashed] (a1)--(b1);  
   \draw[black, very thick] (a2)--(b2);  
   \draw[black, very thick, dashed] (b2)--(c2);
    \draw[black, very thick, dashed] (a3)--(b3);  
   \draw[black, very thick, dashed] (a4)--(b4); 
    \draw[black, very thick, dashed] (a5)--(b5);  
   \draw[black, very thick] (b5)--(c5); 
   \draw[black, very thick] (c5)--(d5);
     \draw[black, very thick, dashed] (a6)--(b6);  
   \draw[black, very thick] (b6)--(c6); 
   \draw[black, very thick] (b6)--(d6);
   
     \draw[black, very thick] (a7)--(b7); 
   \draw[black, very thick,dashed] (c7)--(d7);
     \draw[black, very thick] (d7)--(e7);  
   \draw[black, very thick] (e7)--(f7); 
   \draw[black, very thick] (f7)--(g7);
 \draw[black, very thick,dashed] (g7)--(h7);
\end{scope}  
\end{tikzpicture}
\end{center}
\caption{The seven operations used to construct all uniquely 3-$\chi_\rho$-packable trees.}
\label{ops}
\end{figure}

\begin{lem}\label{sound}
\hfill
\begin{description}
\item[1]
$F_1$, $F_2$ and $F_3$ are uniquely $3$-$\chi_\rho$-packable.
\item[2]
If any of the operations $\co_i$ is applied to a uniquely $3$-$\chi_\rho$-packable tree $T'$, the resulting tree $T$
is uniquely $3$-$\chi_\rho$-packable.
\item[3]
If a uniquely $3$-$\chi_\rho$-packable tree $T$ is obtained from a tree $T'$ using operation $\co_7$, then
$T'$ is uniquely $3$-$\chi_\rho$-packable.
\end{description}
\end{lem}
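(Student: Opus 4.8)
The plan is to prove the three parts of Lemma~\ref{sound} essentially by repeated, careful case analysis, using Claim~\ref{col1} (with $k=3$) and Lemma~\ref{threetwoone} as the workhorses to force colours.

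\textbf{Part 1.} For each of $F_1,F_2,F_3$ I would first exhibit the 3-colouring shown in Figure~\ref{f1f2f3}, verify it is a valid packing colouring (each colour class is an $i$-packing), and check $\chi_\rho \ge 3$ (none of the three is a star, so $\chi_\rho \ge 3$; alternatively each has diameter $\ge 3$). Then for uniqueness I would argue that in any 3-$\chi_\rho$-packing the colours are forced: pick a vertex that must be a 2- or 3-vertex (e.g. a central high-degree vertex, using Claim~\ref{col1} to rule out colour~1 on vertices of degree $\ge 3$), then propagate using the distance constraints — two neighbours cannot share a colour, a 2-vertex has at most one other 2-vertex within distance~2, etc. — and Lemma~\ref{threetwoone} to pin down the leaves. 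Since $F_1,F_2,F_3$ are small fixed trees this is a finite check; the point is that every choice is forced, so the colouring is unique up to identity.

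\textbf{Part 2.} Here I treat each operation $\co_1,\dots,\co_7$ in turn. For each, let $T'$ be uniquely $3$-$\chi_\rho$-packable with its unique colouring $c'$, and let $T$ be the result of applying $\co_i$. I must show (a) $\chi_\rho(T)=3$, and (b) the colouring is unique. For (a), extending $c'$ to the new vertices with the prescribed colours gives a valid packing colouring of $T$ — this requires checking that the new vertex (or path) does not violate the distance condition with existing same-coloured vertices, which is exactly what the side-conditions in the definitions of $\co_i$ guarantee (e.g. in $\co_4$ the new 3-vertex is at distance $\ge 3$ from all old 3-vertices; in $\co_3$ the new 2-vertex is attached to a 1-vertex with no 2-neighbour, so within distance~2 of it there is no other 2-vertex — one needs to check this holds in $T'$, which follows because $T'$ is a tree and the attachment point had no 2-neighbour, so the only 2-vertices within distance~2 would be... this is the kind of local check to be done carefully). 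Also $\chi_\rho(T)\ge\chi_\rho(T')=3$ since $T'\subseteq T$ and packing chromatic number is monotone under subgraphs — actually one must be slightly careful since subgraph monotonicity of $\chi_\rho$ needs the subgraph to be \emph{isometric} or at least distances not to decrease; since $T'$ is obtained from $T$ by deleting vertices and $T$ is a tree, distances in $T'$ between surviving vertices equal those in $T$, so a packing colouring of $T$ restricts to one of $T'$, giving $\chi_\rho(T)\ge 3$. For (b), uniqueness: given any 3-$\chi_\rho$-packing $c$ of $T$, I argue the new vertices are forced to have the prescribed colours (again via Claim~\ref{col1}, Lemma~\ref{threetwoone}, and local distance arguments — e.g. a new leaf on a 2-vertex can only be a 1-vertex; a new leaf on a 3-vertex with a 2-neighbour can only be a 1-vertex, etc.), and then the restriction of $c$ to $T'$ is a 3-$\chi_\rho$-packing of $T'$, hence equals $c'$ by uniqueness; combining, $c$ is the prescribed colouring. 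The subtle operations are $\co_5,\co_6,\co_7$ where a whole $P_3$ (or a replacement path) is attached: for those I need to show the middle vertex of the path is forced to be a 2-vertex and the ends forced to be 1-vertices, which uses that the path dangles off a vertex of colour~3 so colours~2,3 are unavailable adjacent to it, and a degree/packing argument on the path itself.

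\textbf{Part 3.} This is the converse direction for $\co_7$ only: given that $T$ (obtained from $T'$ via $\co_7$, i.e. $T'$ has an edge $uv$ with $c(u)=3,c(v)=1,\deg_T(u)=2$ that got subdivided into $u,w_1,w_2,w_3,w_4,v$) is uniquely $3$-$\chi_\rho$-packable, deduce $T'$ is too. I would first use Part~2 applied to $\co_7$ backwards-compatibly: we already know (from Part~2) that \emph{if} $T'$ were uniquely $3$-$\chi_\rho$-packable then so is $T$ with the stated colouring, and that this colouring has $c(u)=3$, $c(w_1)=1$, etc.; the content here is that uniqueness of $T$ \emph{forces} uniqueness of $T'$. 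The argument: $T'$ is obtained from $T$ by contracting the path $w_1w_2w_3w_4$ back to the edge $uv$; first show $\chi_\rho(T')=3$ by exhibiting a colouring (take any 3-colouring of $T'$ — we must produce one; here is where we might need $T'$ to admit \emph{some} valid 3-packing, which we get by noting $T$ has one and ``un-subdividing'' — but the colours on $u$ and $v$ in $T$ are $3$ and $1$ and $\deg u=2$, so deleting $w_1,\dots,w_4$ and joining $u$ to $v$: need to check $c$ restricted still works, i.e. no conflict created between the now-adjacent $u,v$ — fine since $3\ne 1$ — and no distance-2 conflict, which holds because originally $u$ and $v$ were distance~5 apart and everything between them was on the deleted path). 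Then for uniqueness of $T'$: take any 3-$\chi_\rho$-packing $c'$ of $T'$; I claim $c'(u)=3$ and $c'(v)=1$ are forced. If so, then subdividing and colouring $w_1,w_2,w_3,w_4$ as $1,2,1,3$ yields a valid 3-$\chi_\rho$-packing of $T$, which by uniqueness of $T$ equals the known colouring, whose restriction to $T'$ is the known colouring of $T'$; since $c'$ was arbitrary this gives uniqueness. So the crux is: \emph{why are $c'(u)=3$ and $c'(v)=1$ forced in every 3-$\chi_\rho$-packing of $T'$?} This presumably follows from structural features of $T'$ near $u$ and $v$ that are preserved — e.g. $\deg_{T'}(u)=2$ and $u$ lies in a configuration that forces colour~3 on it, and $v$ is a leaf or low-degree vertex forced to colour~1 — and will require invoking whatever earlier forcing lemmas apply (Claim~\ref{col1}, Lemma~\ref{threetwoone}) together with the fact that $T'$, being uniquely constructible, has $u$ playing a specific role.

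\textbf{Main obstacle.} The hard part is Part~2 for the compound operations $\co_5,\co_6,\co_7$ and, in Part~3, establishing that the colours of $u$ and $v$ are forced in \emph{every} optimal colouring of $T'$ — i.e. controlling all 3-$\chi_\rho$-packings of $T'$ rather than just the canonical one. The danger is a ``rogue'' 3-colouring of $T$ or $T'$ that does not restrict/extend as expected; ruling these out is where the delicate local distance bookkeeping lives, and where one must use that the ambient graph is a \emph{tree} (so distances are additive along the unique path) to close off the cases.
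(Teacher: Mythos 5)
Your outline is fine for Part~1 and for the simple operations $\co_1$--$\co_6$ in Part~2 (modulo the local checks you flag; e.g.\ for $\co_1$ you still need to observe that the existing $2$-vertex lies within distance $2$ of a $3$-vertex --- otherwise $T'$ would be a star and not uniquely packable --- so that the new leaf cannot receive colour~$3$; this is exactly the argument the paper gives). But there are two genuine gaps. First, your uniform scheme for Part~2 --- ``the new vertices are forced to the prescribed colours, then restrict to $T'$ and invoke uniqueness of $c'$'' --- fails for $\co_7$. Under $\co_7$ the tree $T'$ is \emph{not} a subgraph of $T$ (the edge $uv$ is replaced by a path, so $d_{T'}(u,v)=1$ while $d_T(u,v)=5$), hence the restriction of a colouring of $T$ to $V(T')$ need not be a packing colouring of $T'$; and the colours of $u$, $v$ and the $w_i$ are \emph{not} forced: a putative colouring of $T$ with $c(u)=c(v)=1$ admits the pattern $2,1,3,2$ on $w_1,\dots,w_4$, and $c(u)=3$, $c(v)=2$ is locally consistent. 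The paper rules these out by a case analysis on the pair $(c(u),c(v))$, in some cases producing a second valid colouring of $T'$ and in others showing $T'$ would have to be a graph (two stars identified at a leaf) that is not uniquely $3$-$\chi_\rho$-packable. Your proposal contains neither this case split nor a substitute for it, and the forcing claim it rests on is false as stated.

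Second, in Part~3 you reduce everything to the claim that $c'(u)=3$ and $c'(v)=1$ are forced in \emph{every} $3$-$\chi_\rho$-packing of $T'$, and you concede you do not see how to prove this. That is the wrong reduction: nothing about $T'$ alone forces these colours --- that $T'$ has a unique colouring is precisely the conclusion, so it cannot be fed in as a forcing hypothesis. The paper instead argues by contradiction: assuming a packing $c'$ of $T'$ differing from $c$ exists, it splits into the six admissible cases for $(c'(u),c'(v))$ and in each case extends $c'$ across the inserted path $w_1,\dots,w_4$ (using the pattern $1,2,1,3$, or $3,1,2,1$, or $1,3,1,2$, or $2,1,3,1$, or $2,1,3,2$ according to the case, in the last case also recolouring $u$ to $1$ and checking the result still differs from $c$) to obtain a $3$-$\chi_\rho$-packing of $T$ different from $c$, contradicting uniqueness of $T$. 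The enabling observation, absent from your proposal, is that the inserted path is long enough that \emph{any} admissible pair of colours on $u$ and $v$ extends across it; the degree-$2$ hypothesis on $u$ and Lemma~\ref{threetwoone} are what make the two delicate cases ($c'(u)=1,c'(v)=2$ and $c'(u)=2,c'(v)=1$) go through.
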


\begin{proof}
\hfill
\begin{itemize}
\item[1] This is easily verified by inspection.

\item[2] In the case of $\co_2$, $\co_5$ or $\co_6$, the new vertices can only be coloured in one way, given the colours
of the existing vertices.

For $\co_1$, note that the existing 2-vertex must be at distance at most 2 from a 3-vertex, otherwise $T'$ is a star,
which is not uniquely $3$-$\chi_\rho$-packable. The new vertex can therefore only receive colour~1. Similar
arguments apply to $\co_3$ and $\co_4$.

Next, we prove the lemma for the case of $\co_7$. 

Let $T$ be obtained from $T'$ by $\co_7$, and let $u$, $v$ and $w_i$ be as in the description of $\co_7$.
Let $c'$ be the unique $3$-$\chi_\rho$-packing of $T'$ and let $c$ be a $3$-$\chi_\rho$-packing of $T$ that
differs from the $3$-$\chi_\rho$-packing produced by $\co_7$.

If $c(u)=3$ and $c(v)=1$, there is only one way to colour the $w_i$'s, and restricting $c$ to $T'$ yields
a $3$-$\chi_\rho$-packing of $T'$ different from $c'$, contradicting the uniqueness of $c'$. Similarly if
$c(u)=1$ and $c(v)=3$, or $c(u)=2$ and $c(v)=1$, or $c(u)=1$ and $c(v)=2$.
 

Suppose $c(u)=3$ and $c(v)=2$. Again there is only one way to colour the $w_i$'s  $(c(w_1) =1, c(w_2) = 2, c(w_3) = 1, c(w_4) =3)$. Now suppose that restricting $c$ to $T'$ does not yield a proper colouring. Then the neighbour $x$ of $u$ other than $w_1$ must have colour~2. Since $c(w_4)=3$ and $c(v)=2$, and $c(u)=3$ and $c(x)=2$, it follows that $T'$ is obtained from two stars with centers corresponding to $x$ and $v$ by identifying a leaf (corresponding to $u$) from each. But such a graph is not uniquely $3$-$\chi_\rho$-packable.

A similar argument applies if $c(u)=2$ and $c(v)=3$. 

We cannot have $c(u)=c(v)=2$ or $c(u)=c(v)=3$, since there is no way to colour the $w_i$'s. That leaves $c(u)=c(v)=1$: The only possibility is for $w_1$, $w_2$, $w_3$ and $w_4$ to be coloured 2, 1, 3 and 2, respectively.
(The fact that $u$ has degree 2 eliminates 2, 3, 1, 2.) So $v$ has degree~1 and  we can change its colour to 2 and restrict to $T'$ to obtain a valid colouring of $T'$
that differs from $c'$ on $u$.

\item[3] Let $c$ be the unique $3$-$\chi_\rho$-packing of $T$, suppose that $T'$ is not uniquely $3$-$\chi_\rho$-packable and let $c'$ be a $3$-$\chi_\rho$-packing that differs from $c$ on $T'$. 
Let $u$ and $v$ be the $3$- and $1$-vertices (under $c$) of $T'$ to which $\co_7$ is applied, respectively.
We consider the following cases: (Note that cases 2, 4, and 6 are not symmetric to their counterparts, because of the degree constraint on $u$. This only plays a role in Case~5 and Case~6 though.)

{\sc Case 1:}  $c'(u)=3$ and $c'(v)=1$. \\
We extend $c'$ to $T$ by colouring $w_1$, $w_2$, $w_3$ and $w_4$ with 1, 2, 1 and 3, respectively, to contradict the uniqueness of $c$. 

{\sc Case 2:}  $c'(u)=1$ and $c'(v)=3$. \\
We can reverse the sequence of colours in Case 1 above, that is extend $c'$ to $T$ by colouring $w_1$, $w_2$, $w_3$ and $w_4$ with
3,1,2 and 1, respectively.

{\sc Case 3:} $c'(u)=3$ and $c'(v)=2$. \\
We extend $c'$ to $T$ by colouring $w_1$, $w_2$, $w_3$ and $w_4$ with
1, 2, 1 and 3, respectively. 

{\sc Case 4:} $c'(u)=2$ and $c'(v)=3$. \\
We reverse the sequence in Case 3, that is extend $c'$ to $T$ by colouring $w_1$, $w_2$, $w_3$ and $w_4$ with
1, 3, 1 and 2, respectively, to contradict the uniqueness of $c$. 

{\sc Case 5:}  $c'(u)=1$ and $c'(v)=2$. \\
The other neighbour $w$ of $u$ can only have $c'(w)=3$, so $v$ has no $3$-neighbour under $c'$ and we can
extend $c'$ to $T$ by colouring $w_1$, $w_2$, $w_3$ and $w_4$ with 2, 1, 3 and 1, respectively.. 

{\sc Case 6:} $c'(u)=2$ and $c'(v)=1$. \\
If the other neighbour $w$ of $u$ has $c'(w)=1$, we extend $c'$ to $T$ by colouring $w_1$, $w_2$, $w_3$ and $w_4$ with 1, 3, 1 and 2, respectively. 
Otherwise, $c'(w)=3$, so by Lemma \ref{threetwoone}, $v$ has no other neighbours. Here we change the colour of $u$ to 1,
and colour $w_1$, $w_2$, $w_3$ and $w_4$ with 2, 1, 3 and 2, respectively. Although we changed the colour of $u$, the extended colouring does differ from $c$
because $c'(w)=3$ but $c(w)\neq 3$ as $c(u)=3$.

All other cases are ruled out by the fact that $uv$ is an edge of $T'$.
\qedhere
\end{itemize}
\end{proof}

\begin{lem}\label{twothree}
If $G$ is a uniquely $3$-$\chi_\rho$-packable graph, then $G$ has a 2-vertex adjacent to a 3-vertex.
\end{lem}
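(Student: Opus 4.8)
The plan is a short proof by contradiction. Suppose $G$ is uniquely $3$-$\chi_\rho$-packable with unique $3$-$\chi_\rho$-packing $c$, but no $2$-vertex of $c$ is adjacent to a $3$-vertex. Since $\chi_\rho(G)=3$, all of the colours $1,2,3$ occur under $c$. The first step is to read off the structural consequence of the assumption: as $c$ is a proper colouring, no two $1$-vertices, no two $2$-vertices, and no two $3$-vertices are adjacent, and by hypothesis no $2$-vertex is adjacent to a $3$-vertex; hence every edge of $G$ joins a $1$-vertex to a vertex of colour $2$ or colour $3$. Therefore $G$ is bipartite, with one part the set of $1$-vertices and the other part the set $B$ of vertices of colour $2$ or $3$.

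The crucial point is that bipartiteness upgrades the packing constraint on the $2$-vertices: any two vertices of $B$ lie in the same part of the bipartition and so are at even distance, so any two distinct $2$-vertices, which are at distance more than $2$ (they form a $2$-packing), are in fact at distance at least $4$. Now swap colours globally: recolour every $2$-vertex with $3$ and every $3$-vertex with $2$, leaving the $1$-vertices fixed. The new colour-$1$ class is unchanged, hence independent; the new colour-$2$ class is the old colour-$3$ class, whose vertices are pairwise at distance more than $3$, hence more than $2$; and the new colour-$3$ class is the old colour-$2$ class, whose vertices are pairwise at distance at least $4$, hence more than $3$. So the new colouring is again a packing colouring of order $3$, and it differs from $c$ because each vertex of colour $2$ under $c$ (there is at least one) now has colour $3$. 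This contradicts the uniqueness of $c$.

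I expect the main temptation — and the thing to avoid — is to hunt for a \emph{local} modification around a single $2$- or $3$-vertex; this leads into awkward case distinctions, since such a vertex may have several neighbours forcing cascading recolourings, and the packing condition couples far-apart vertices. The clean idea is instead to notice that the hypothesis already pins down the global bipartite structure and that in a bipartite graph a $2$-packing sitting inside one part is automatically a $3$-packing, which is precisely what makes the wholesale swap of colours $2$ and $3$ legitimate. (This argument is purely structural; it uses neither that $G$ is a tree nor Claim~\ref{col1} or Lemma~\ref{threetwoone}.)
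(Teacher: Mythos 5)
Your proof is correct and follows essentially the same route as the paper: both arguments show that under the hypothesis any two $2$-vertices are at distance at least $4$, and then interchange the colour classes $2$ and $3$ to contradict uniqueness. The only difference is cosmetic — you derive the distance bound from the parity forced by the bipartition into $1$-vertices and $\{2,3\}$-vertices, whereas the paper observes directly that two $2$-vertices at distance $3$ would force two adjacent $1$-vertices on the connecting path.
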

\begin{proof}
Let $c$ be the unique $3$-$\chi_\rho$-packing of $G$ and suppose no 2-vertex is adjacent to a 3-vertex.
Then no two 2-vertices $x$ and $y$ are at distance 3, otherwise both $u$ and $v$
on any path $x,u,v,y$ must have colour~1.
But then we can interchange the colour classes 2 and 3, contradicting the uniqueness of $c$.
\end{proof}
 
\begin{lem}\label{threes}
Let $F\se T$ be trees, where $F$ is uniquely $3$-$\chi_\rho$-packable, and let $v$ be any 3-vertex of $F$.
For any $3$-$\chi_\rho$-packing $c$ of $T$, the 3-vertices are precisely those vertices whose distance from
$v$ is a multiple of 4.
\end{lem}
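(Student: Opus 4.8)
The plan is to treat this as a statement about how colour~$3$ must propagate along paths in \emph{any} $3$-$\chi_\rho$-packing of a tree; the hypothesis on $F$ will be used only to guarantee that the prescribed vertex $v$ is forced to be a $3$-vertex under $c$. So the first step is that reduction. Since $F$ is a connected subgraph of the tree $T$, the unique $T$-path between any two vertices of $F$ lies inside $F$, so $d_F(x,y)=d_T(x,y)$ for all $x,y\in V(F)$; hence the restriction of $c$ to $V(F)$ is again a $3$-$\chi_\rho$-packing of $F$, and by unique $3$-$\chi_\rho$-packability it must equal the colouring of $F$. In particular $v$ is a $3$-vertex of $c$, and from here on I only use that fact.

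The key step is a local claim: if $a$ and $b$ are $3$-vertices of $c$ lying on a path $Q$ of $T$ with no $3$-vertex of $Q$ strictly between them, then $d(a,b)=4$ and the three intermediate vertices of $Q$ are coloured $1,2,1$. To prove it, note $d(a,b)\ge 4$ since colour~$3$ is a $3$-packing; the vertices of $Q$ strictly between $a$ and $b$ carry only colours~$1$ and~$2$, and since a packing colouring is proper they strictly alternate along $Q$; but two $2$-vertices are at distance at least~$3$, while in a strictly alternating $\{1,2\}$-sequence consecutive $2$'s are at distance~$2$, so at most one intermediate vertex is a $2$-vertex; finally a strictly alternating $\{1,2\}$-sequence with at most one $2$ has length at most~$3$, which forces $d(a,b)\le 4$, hence equality and the stated pattern.

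With the claim in hand I would prove both inclusions at once. Fix a vertex $u$, write the path $Q$ from $v$ to $u$ as $v=q_0,q_1,\dots,q_\ell=u$, and let $r_0=v,r_1,\dots,r_s$ be the $3$-vertices of $c$ on $Q$ listed in order. Applying the claim to consecutive $r_i$'s gives $d(r_{i-1},r_i)=4$, hence $r_i=q_{4i}$ for every $i$. If $c(u)=3$ then $u=r_s=q_{4s}$, so $d(v,u)=4s$ is a multiple of~$4$. Conversely, if $d(v,u)=4m$, then $r_s=q_{4s}$ with $4s\le 4m$, and the vertices $q_{4s+1},\dots,q_{4m}$ are all non-$3$; as in the key step they carry colours~$1,2$, strictly alternate, and contain at most one $2$, so there are at most~$3$ of them. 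Since $4m-4s$ is a nonnegative multiple of~$4$ it must be $0$, i.e.\ $m=s$ and $u=q_{4m}$ is a $3$-vertex (the case $u=v$ being trivial).

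The only genuinely nontrivial ingredient is the small combinatorial fact that a strictly alternating $\{1,2\}$-sequence with at most one $2$ has length at most~$3$; everything else is bookkeeping with additivity of distances along a path in a tree. The one point to handle with care is that the "properness forces alternation" and "two $2$'s are far apart" observations are applied to subpaths of $Q$, which is legitimate precisely because distances measured along $Q$ coincide with distances in $T$.
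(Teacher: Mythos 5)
Your proof is correct, and the underlying combinatorics is the same as the paper's: along the unique path from $v$, the colours between $3$-vertices are forced into the pattern $1,2,1$. The organisation differs, though. The paper propagates colours step by step from $v$ ($c(v_1)\in\{1,2\}$; if $2$ then Lemma~\ref{threetwoone} terminates the path at length at most $2$; if $1$ then $c(v_2)=2$, $c(v_3)=1$, $c(v_4)=3$, and repeat), thereby determining the entire colour sequence. You instead isolate a clean two-sided squeeze -- consecutive $3$-vertices on a path are at distance at least $4$ by the packing condition and at most $4$ because the intermediate vertices form a strictly alternating $\{1,2\}$-sequence with at most one $2$ -- and then handle the tail after the last $3$-vertex by the same counting. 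Your version buys two things: it avoids any appeal to Lemma~\ref{threetwoone}, and it makes explicit the reduction (via restricting $c$ to the subtree $F$, where distances are preserved) showing that $c(v)=3$, a step the paper leaves implicit. The paper's version buys slightly more information, namely the full forced colouring of the path, though the lemma's statement does not need it.
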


\begin{proof} Let $u$ be any vertex other than $v$ and consider the unique $v$--$u$ path $P:v,v_1,\ldots,v_k=u$. There are two possibilities for $c(v_1)$, that is, $c(v_1) =2$ or $c(v_1) =1$.  
If $c(v_1)=2$, then $c(v_2)=1$ (if $v_2$ exists). It follows immediately from Lemma~\ref{threetwoone} that $k\leq 2$. Otherwise, $c(v_1)=1$, $c(v_2)=2$,
$c(v_3)=1$ and $c(v_4)=3$ (if these vertices exist). Next, if $v_5$ exists, then there are again two possibilities for $c(v_5)$ that is, $c(v_5)=2$ or $c(v_5)=1$. Thus, repeating the above process and continuing in this way the result follows.   
\end{proof}

An immediate consequence of Lemma~\ref{threes} is the following, which we will use repeatedly
henceforth.

\begin{lem}\label{fst}
Let $F\se S\se T$ be trees, where $F$ is uniquely $3$-$\chi_\rho$-packable, and let $c$ be any
$3$-$\chi_\rho$-packing of $T$. For every $u\in V(S)$ and every $3$-$\chi_\rho$-packing
$c'$ of $S$, we have $c'(u)=3$ iff $c(u)=3$. 
\end{lem}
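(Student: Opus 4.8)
The statement says: if $F \subseteq S \subseteq T$ with $F$ uniquely $3$-$\chi_\rho$-packable, and $c$ is any $3$-$\chi_\rho$-packing of $T$ while $c'$ is any $3$-$\chi_\rho$-packing of $S$, then for every $u \in V(S)$ we have $c'(u)=3$ iff $c(u)=3$. The natural route is to fix a reference $3$-vertex $v$ of $F$ (which exists by part 1 of Lemma \ref{sound} together with the fact that $F$ is one of $F_1,F_2,F_3$, or can be exhibited directly) and then apply Lemma \ref{threes} twice — once with the pair $F \subseteq T$ and once with the pair $F \subseteq S$ — to get a purely metric (distance-in-$F$-independent) description of where the $3$-vertices sit.

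First I would note that since $F$ is uniquely $3$-$\chi_\rho$-packable, it has at least one $3$-vertex: by part 1 of Lemma \ref{sound} each $F_i$ has a $3$-vertex, and more robustly, Lemma \ref{twothree} gives a $2$-vertex adjacent to a $3$-vertex, so $3$-vertices of $F$ exist under the unique colouring. Fix such a $v$. Now $F \subseteq S$ and $F \subseteq T$ both satisfy the hypotheses of Lemma \ref{threes}. Apply Lemma \ref{threes} to $F \subseteq T$ with the $3$-$\chi_\rho$-packing $c$: the $3$-vertices of $T$ under $c$ are exactly the vertices $w$ with $d_T(v,w) \equiv 0 \pmod 4$. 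Apply Lemma \ref{threes} to $F \subseteq S$ with the $3$-$\chi_\rho$-packing $c'$: the $3$-vertices of $S$ under $c'$ are exactly the vertices $w$ with $d_S(v,w) \equiv 0 \pmod 4$. Since $S$ is a subtree of $T$ containing both $v$ and any given $u \in V(S)$, the unique $v$–$u$ path in $S$ coincides with the unique $v$–$u$ path in $T$, so $d_S(v,u) = d_T(v,u)$. Therefore $c'(u)=3 \iff d_S(v,u)\equiv 0 \pmod 4 \iff d_T(v,u)\equiv 0\pmod 4 \iff c(u)=3$, which is exactly the claim.

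The only subtlety — and the one step I would spell out rather than wave at — is confirming that Lemma \ref{threes} genuinely applies to the pair $F \subseteq S$: its hypotheses require only that $F$ be uniquely $3$-$\chi_\rho$-packable and that $c'$ be a $3$-$\chi_\rho$-packing of the larger tree $S$, both of which hold here, so there is no real obstacle; the distance-preservation fact ($d_S = d_T$ on $V(S)\times V(S)$ because $S$ is a connected subgraph of the tree $T$) is standard for trees. One genuinely should check that $F$ has a $3$-vertex at all, since Lemma \ref{threes} is vacuous otherwise; this is why I invoke Lemma \ref{twothree} (or inspection of $F_1,F_2,F_3$) at the outset. With $v$ fixed once and for all, the rest is immediate, so I expect the "proof" to be a two-sentence deduction from Lemma \ref{threes}.
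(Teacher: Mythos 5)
Your proposal is correct and follows essentially the same route as the paper: apply Lemma~\ref{threes} once to the pair $F\se S$ with $c'$ and once to the pair $F\se T$ with $c$, using a fixed $3$-vertex $v$ of $F$, and conclude that in both $S$ and $T$ the $3$-vertices are exactly those at distance a multiple of $4$ from $v$. Your extra remarks (that $F$ indeed has a $3$-vertex, and that $d_S=d_T$ on $V(S)$ because $S$ is a subtree of the tree $T$) are points the paper leaves implicit, and spelling them out only strengthens the argument.
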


\begin{proof}
Let $F\se S$. Since $F$ is uniquely $3$-$\chi_\rho$-packable and $c'$  a $3$-$\chi_\rho$-packing on $S$, we have by Lemma \ref{threes} that for each $v\in V(F)$ with $c(v) =3$, the $3$-vertices in $S$ are those at  distance $4i, i\in \mathbb{N}$, from $v$. Next, let  $F\se T$. Since $F$ is uniquely $3$-$\chi_\rho$-packable and $c$  a $3$-$\chi_\rho$-packing on $T$, a similar argument shows that for each $v\in V(F)$ with $c(v) =3$, the $3$-vertices in $T$ are those at  distance $4i, i\in \mathbb{N}$, from $v$. Hence $c'(u) =3$ iff $c(u) = 3$ for all $u\in V(S)$. 
\end{proof}
\begin{thm}\label{3ptree}
A tree $T$ is uniquely $3$-$\chi_\rho$-packable iff it is obtained from $F_1$, $F_2$ or $F_3$ by zero or more applications of the operations $\co_i$, $i=1,\ldots,7$.
\end{thm}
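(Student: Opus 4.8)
The plan is to prove the two directions of the equivalence separately, with the "only if" direction being the substantial one.

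The "if" direction is essentially already done: Lemma~\ref{sound}(1) says $F_1,F_2,F_3$ are uniquely $3$-$\chi_\rho$-packable, and Lemma~\ref{sound}(2) says each operation $\co_i$ preserves this property. So I would argue by induction on the number of operations applied: the base case is Lemma~\ref{sound}(1), and the inductive step is Lemma~\ref{sound}(2). This direction occupies at most a sentence or two.

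For the "only if" direction, let $T$ be a uniquely $3$-$\chi_\rho$-packable tree with unique colouring $c$. The strategy is to show that $T$ either is one of $F_1,F_2,F_3$, or can be "reduced" to a strictly smaller uniquely $3$-$\chi_\rho$-packable tree $T'$ via the inverse of some operation $\co_i$; then induction on $|V(T)|$ finishes the argument (using Lemma~\ref{sound}(3) to handle the $\co_7$ case, since that is the only operation whose inverse needs a separate soundness check). So the heart of the proof is a case analysis showing that every sufficiently large uniquely $3$-$\chi_\rho$-packable tree has a "reducible configuration." I would start from the structural facts already available: by Lemma~\ref{twothree}, $T$ has a $2$-vertex $x$ adjacent to a $3$-vertex $y$; by Lemma~\ref{threetwoone}, all other neighbours of $x$ are $1$-leaves; by Claim~\ref{col1} (with $k=3$) every $1$-vertex has degree $\le 2$; and by Lemma~\ref{threes}/\ref{fst} the set of $3$-vertices and its distance structure is rigid. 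Then I would look at a leaf of $T$ realizing the eccentricity, or more carefully at the "pendant structures" hanging off the tree, and identify which operation created the outermost such structure. For instance, a $1$-leaf at a $2$-vertex can be peeled off by $\co_1^{-1}$; a pendant $2$-leaf at a $1$-vertex with no other $2$-neighbour by $\co_3^{-1}$; a pendant $3$-leaf by $\co_4^{-1}$; pendant paths of length $2$ or $3$ with the prescribed colour patterns by $\co_5^{-1}$ or $\co_6^{-1}$; and the role of $\co_7$ is to handle the case where $T$ has a long induced path of $3$-vertices spaced $4$ apart that cannot be shortened any other way. In each reduction one must check that (a) the configuration genuinely occurs once $T$ is large enough, (b) removing/contracting it yields a tree $T'$ that is still uniquely $3$-$\chi_\rho$-packable (immediate for $\co_1$--$\co_6$ because they are sound and the reduced colouring is forced; for $\co_7$ this is exactly Lemma~\ref{sound}(3)), and (c) $T'$ is strictly smaller, so induction applies and $T'$—hence $T$—is built from $F_1,F_2,F_3$.

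The main obstacle will be the exhaustiveness of the case analysis in the "only if" direction: one must argue that a uniquely $3$-$\chi_\rho$-packable tree that admits \emph{none} of the inverse operations $\co_1^{-1},\dots,\co_7^{-1}$ must be $F_1$, $F_2$, or $F_3$. This requires carefully cataloguing, using Lemmas~\ref{threetwoone}, \ref{threes} and Claim~\ref{col1}, exactly what a "minimal irreducible" tree can look like near its leaves and along its $3$-vertex skeleton, and checking that every pendant configuration other than those present in $F_1,F_2,F_3$ is the image of some $\co_i$. The $\co_7$ case is the most delicate, since unlike the others it does not shrink a pendant piece but contracts an internal path of four vertices; one has to ensure that whenever $T$ contains a degree-$2$ $3$-vertex $u$ joined through a $1,2,1,3$ pattern to a $1$-vertex, the contraction is legitimate and that no large irreducible tree avoids both all pendant reductions and this internal one.
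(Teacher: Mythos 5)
Your overall architecture for the ``only if'' direction (minimal counterexample / induction on order, locate a pendant configuration created by some $\co_i$, undo it, recurse) is the same as the paper's, and your ``if'' direction is identical to the paper's appeal to Lemma~\ref{sound}(1) and (2). However, there is a genuine gap at your step (b): you assert that for $\co_1$--$\co_6$ the reduced tree $T'$ is ``immediately'' uniquely $3$-$\chi_\rho$-packable ``because they are sound.'' Soundness (Lemma~\ref{sound}(2)) is the implication in the wrong direction: it says that applying $\co_i$ to a uniquely packable tree yields a uniquely packable tree, not that a tree from which a uniquely packable tree arises via $\co_i$ is itself uniquely packable. That converse is proved in the paper only for $\co_7$ (Lemma~\ref{sound}(3)). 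For the other operations it is precisely the point at issue: a second colouring $c'$ of $T'=T-x$ need not extend to $T$ (for instance if $c'$ gives the neighbour $u$ of $x$ colour~1 and places colours near $u$ so that $x$ can receive neither 2 nor 3), so the uniqueness of $c$ on $T$ does not automatically transfer down to $T'$.

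The paper closes this gap with machinery your plan does not set up. It first proves, via Lemma~\ref{twothree} and a longest-path analysis, that a minimal counterexample $T$ \emph{contains one of $F_1$, $F_2$, $F_3$ as a subgraph} $F$; this containment is what licenses Lemma~\ref{fst} (both Lemma~\ref{threes} and Lemma~\ref{fst} require a uniquely packable subtree $F$ as an anchor), which rigidly determines the colour-3 class of \emph{every} $3$-$\chi_\rho$-packing of every tree between $F$ and $T$. Armed with that rigidity, the paper then shows in each case of the leaf-peeling analysis that a hypothetical second colouring $c'$ of $T'$ \emph{can} in fact be extended to a colouring of $T$ different from $c$ (sometimes after restoring the deleted vertices with their original colours, or after a local recolouring), contradicting uniqueness of $c$; that is, it \emph{proves} rather than assumes that $T'$ is uniquely packable, and this is where most of the work of the theorem lives. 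You mention Lemmas~\ref{threes} and \ref{fst} as available tools, but without first establishing the subgraph claim you cannot invoke them, and without them your case analysis cannot exclude the bad second colourings of $T'$. So the plan identifies the right reductions, but the justification that each reduction lands on a uniquely packable tree --- the heart of the paper's proof --- is missing.
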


\begin{proof} It follows from the first two parts of Lemma~\ref{sound} that all trees obtained in this way are uniquely $3$-$\chi_\rho$-packable.

For the converse, let $T$ be a counterexample of minimum order. So, $T$ is uniquely $3$-$\chi_\rho$-packable but $T$ cannot be obtained from any smaller uniquely $3$-$\chi_\rho$-packable tree using the operations $\co_i$, $i=1,\ldots,7$.
Let $c:V(T)\imp \{1,2,3\}$ be the 3-$\chi_\rho$-packing of $T$.

\noindent{\bf Claim:} $T$ contains $F_1$, $F_2$ or $F_3$ as a subgraph. \\

Let, according to Lemma~\ref{twothree},  $x$ and $y$ be adjacent vertices with colours~2 and~3, respectively. By Lemma \ref{threetwoone}, all other neighbours of $x$ have colour~1 and can have no further neighbours. Moreover, we show that there are at least two vertices of colour 1 adjacent to $x$ in $T$. If none, we can recolour $x$ with 1 and so $T$ is not uniquely $3$-$\chi_\rho$-packable. If there exists a unique vertex of colour 1 adjacent to $x$, say $u_1$, then we can give $u_1$ colour 2 and $x$ colour 1. This again is a contradiction to the fact that $T$ is uniquely $3$-$\chi_\rho$-packable.

Let $P:u_1,x,y,u_2,u_3,\ldots,u_\ell$ be a longest path with end-vertex $u_1$. We must have $\ell \geq 3$, otherwise
$T$ is clearly not uniquely $3$-$\chi_\rho$-packable.

If $y$ has degree at least 3, then $T$ contains $F_1$. Now assume $\deg(y)=2$. Since $u_2$ has
colour~1, it follows from Claim~\ref{col1} that $u_2$ has degree 2.
If $\ell<5$, then $c$ is not unique (interchange the colours of $y$ and $u_2$). Assume therefore that $\ell \geq 5$.

Now if $\deg(u_3)>2$, we have $F_2$ as subgraph, so let $\deg(u_3)=2$. Since $c(u_4)=1$ we also have $\deg(u_4)=2$.
Again, if $\ell=5$, then $c$ is not unique since vertices of $P:$ $u_1$, $x$, $y$, $u_2$, $u_3$, $u_4$, $u_5$ can be coloured 
$1,2,1,3,1,2,1$.

Therefore $\ell \geq 6.$ Suppose first that $u_5$ has a 2-neighbour $v$. If $v$ is adjacent to one vertex, say $v_1$, then the colours of $v$ and $v_1$ can be interchanged, which implies that $c$ is not unique. It follows that $v$ has at least two neighbours other than $u_5$
(these have colour~1) and they have no neighbours other than $v$. If $\deg(u_5)>2$, $T$ contains $F_1$, and so we let $\deg(u_5)=2$. But then $c$ is not unique since we can colour vertices $u_1$, $x$, $y$, $u_2$, $u_3$, $u_4$, $u_5$, $v$ with
$1,2,1,3,1,2,1$ and $3$, respectively.

Suppose now that all neighbours of $u_5$ have colour~1. Any such neighbour $z$ must have a $2$-neighbour $w$,
otherwise we can change the colour of $z$ to 2 and $u_5$ has a $2$-neighbour as in the argument above. Also, $w$ must have another $1$-neighbour, otherwise the colours of $z$ and $w$ can be switched. Now, if $\deg(u_5)>2$, $T$ contains $F_3$. Otherwise if $\deg(u_5)=2$, then $T$ is obtained from a smaller uniquely $3$-$\chi_\rho$-packable tree by $\co_7$, using Lemma~\ref{sound}(3), which is a contradiction.
\vspace{0.3cm}

Note that ``being $k$-$\chi_\rho$-packable" is an hereditary property of graphs: If a graph $G$ has the property, so does any subgraph $H$ of $G$, since
for any two vertices $u$, $v$ in $X_i \cap V(H)$, where $X_i$ is an $i$-packing in $G$, the following holds: $d_H(u,v) \geq d_G(u,v) \geq i+1$.
\vspace{0.3cm}

Now we prove that any uniquely $3$-$\chi_\rho$-packable tree $T$ can only be obtained from $F_i$, $i=1,2$ or $3$.

\vspace{0.3cm}

Let $F$ be the subtree $F_1$, $F_2$ or $F_3$ of the claim. If $T=F$ we are done, so let $x$ be a leaf of $T$
that is not in $F$, and let $u$ be its neighbour. We consider the following cases.

\noindent{\bf Case 1:} $c(x)=2$. \\ 
Since $c(x) = 2$, we have that $c(u)=1$, otherwise we can set $c(x)=1$, contradicting the uniqueness of $c$. Then $u$ has degree $2$ and its neighbour other than $x$, say $w$, must have colour~$3$.

Let $T'=T-x$. If $T'$ is uniquely $3$-$\chi_\rho$-packable, the minimality of $T$ is contradicted, since $T$ is obtained from
$T'$ using $\co_3$. So let $c'$ be a $3$-$\chi_\rho$-packing of $T'$ that differs from $c$ on $T'$.

We must have $c'(u)=1$, otherwise we can extend $c'$ to $T$, by setting $c'(x)=1$, to obtain a colouring of $T$
that differs from $c$. By Lemma~\ref{fst}, $c'(w)=3$, so we can set $c'(x)=2$ and again obtain a colouring of $T$ different from $c$, contradicting the fact that $c$ is a uniquely $3$-$\chi_\rho$-packing of $T$.

\noindent{\bf Case 2:} $c(x)=3$. \\
Clearly, $c(x)=3$ implies that $c(u)=1$, otherwise we can set $c(x)=1$, contradicting the uniqueness of $c$. Then $u$ has degree $2$ and its neighbour other than $x$, say $w$, must have colour~$2$.
Let $T'=T-x$. By the minimality of $T$ and the fact that $T$ is obtained from $T'$ using $\co_4$, there is a 3-$\chi_\rho$-packing $c'$ of $T'$ that differs from $c$ on $T'$. Again, $c'(u)=1$, following a similar argument as in Case 1 above. 

From Lemma~\ref{fst} it follows that we can extend $c'$ to $T$ by setting $c'(x)=3$, contradicting
the uniqueness of $c$.

\noindent{\bf Case 3:} $c(x)=1$.
For this case, we have two possibilities for $c(u)$.
\begin{itemize}
\item $c(u)=3$. \\
If $c(u)=3$, then $u$ must have a 2-neighbour, otherwise we can change $c(x)$ to 2, contradicting the uniqueness of $c$. Again, if we let $T' = T-x$, then by the minimality of $T$, and the fact that $T$ is obtained from
$T'$ using $\co_2$, there is a 3-$\chi_\rho$-packing $c'$ of $T'$ that differs from $c$ on $T'$. Hence, it follows from Lemma~\ref{fst} that any colouring of $T'$ can be extended to $T$ by giving $x$ colour~1, and a contradiction follows as before.

\item $c(u)=2$. \\
Let $T' = T -x$ and let $c'$ be a 3-packing colouring of 
$T'$ that differs from $c$ on $T'$. Such a colouring $c'$ exists by the minimality assumption of $T$ and the fact that $T$ is obtained from $T'$ by applying $\co_1$. 

Suppose first that the degree of $u$ is at least $4$ in $T$ (that is at least 3 in $T'$). Then, $c'(u) \neq 1$ by Claim \ref{col1}, and we can extend $c'$ to $T$ by letting $c'(x) =1$. This contradicts our assumption that $T$ is uniquely $3$-$\chi_\rho$-packable since $c$ and $c'$ differ on at least one vertex of $T'$. Thus, we now assume that the degree of $u$ in $T$ is either $2$ or $3$.
   
Suppose that $\deg(u)=2$. Let $v$ be the other neighbour of $u$. Then $c(v)=1$, otherwise
we can swap the colours of $x$ and $u$. Now, $v$ has degree 2 and its other neighbour, $w$ say,
has colour~3. Let $T'$ be obtained from $T$ by removing those of $x$, $u$ and $v$ that do not belong
to $F$. (If $u\in V(F)$, then so is $v$, hence $T'$ is a tree.) Note that $T$ is obtained from $T'$ using some combination of $\co_1$, $\co_2$ and $\co_3$ in most cases. If $T' = T-\{x,u,v\}$, then $T$ can be obtained from $T'$ by using $\co_5$.
Consider a $3$-$\chi_\rho$-packing $c'$ of $T'$ that differs from $c$ on $T'$. By Lemma~\ref{fst}
we have $c'(w)=3$. Those of $u$ and $v$ that belong to $F$ can only be coloured as by $c$. Those
not belonging to $F$ can be replaced, together with $x$, and coloured as by $c$, contradicting the uniqueness
of $c$.

Therefore, we have $\deg(u)=3$. Let $y$ and $z$ be the other two neighbours of $u$. Suppose first that
both have colour~1. One of them, say $y$, must have another neighbour $w$, which can only have colour~3. Since $T'=T-x$ and considering that in any $3$-$\chi_\rho$-packing $c'$ of $T'$, we have $c'(w)=3$ by Lemma \ref{fst}, there is only one way to colour $u$, $y$ and $z$, hence a contradiction with the minimality of $T$ follows.

So $y$, say, has colour~1 and $z$ has colour~3. Examining the graphs $F_i$, it is easily checked that
$u$ and $y$ cannot belong to $F$. (Note that $y$ has degree 1.) We remove $x$, $y$ and $u$
to obtain $T'$, note that $T$ is obtained from $T'$ using $\co_6$ and consider a 
$3$-$\chi_\rho$-packing $c'$ of $T'$ that differs from $c$ on $T'$.

By Lemma~\ref{fst}, $c'(z)=3$. If $z$ has no neighbour $w$ with $c'(w)=2$, we are done, as $c'$ can be extended to $T$ in the
obvious way, so suppose that such a $w$ exists. We must have $c(w)=1$, so, since $c'(w)\neq c(w)$, we
cannot have $w\in V(F)$, because $F$ is uniquely $3$-$\chi_\rho$-packable.
If $w$ is a leaf we therefore have the case $c(u)=3$, so suppose $w$ has
another neighbour $s$. Then $c'(s)=1$, $c(s)=2$ and $s$ is a leaf (considering $c'(z)=3$ and $c'(w)=2$).
Therefore we have Case~1 and the proof is complete.
\end{itemize}
\end{proof}

\section{Monotonocity of the packing colouring}\label{Sec3}
In this section, we consider the three graphs, $F_1$-$F_3$, described in \ref{Char3chi}, from which all uniquely $3$-$\chi_\rho$-packable trees are constructed and show that their colourings are monotone. In addition, we show that by iteratively applying any one of the operations $\co_{1}-\co_{7}$, we show that there exists a uniquely $3$-$\chi_\rho$-packable tree, $T_k$, with no monotone $\chi_{\rho}$-colouring. As a by-product we obtain sets of uniquely $3$-$\chi_\rho$-packable trees with monotone $\chi_{\rho}$-colouring and non-monotone $\chi_{\rho}$-colouring respectively.

\begin{defi}\label{defbkr}
Let $G$ be a graph. For a colouring $c: V(G) \rightarrow \{1, 2, 3\}$, and a colour $m$, $1 \leq m \leq 3$, let $c_m = |\{v \in V(G): c(v) = m \}|$, that is, $c_m$ is the cardinality of the class of vertices, coloured by $m$. We say that $c$ is monotone if $$c_1 \geq c_2 \geq c_3.$$
\end{defi}
\begin{prop}\cite{bkr}\label{pbkr}
For any graph $G$ and any $m$, where $m \leq \lfloor \chi_\rho(G)/2\rfloor$, there exists a $\chi_\rho(G)$-colouring $c: V(G) \rightarrow \{1,\cdots,k\}$ such that $c_m \geq c_n$ for all $n \geq 2m$. 
\end{prop}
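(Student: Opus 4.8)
The plan is to start from an arbitrary optimal packing colouring and then greedily inflate the colour class $X_m$ until it is inclusion-maximal as an $m$-packing; the desired inequalities $c_m\ge c_n$ for all $n\ge 2m$ will then drop out of a one-line counting argument via the triangle inequality. The point of the hypothesis $n\ge 2m$ is precisely that two vertices at distance more than $n$ cannot have a common ``close'' neighbour at distance at most $m$, which is what forces the comparison of sizes.

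First I would take any $\chi_\rho(G)$-colouring $c$ with colour classes $X_1,\dots,X_k$, where $k=\chi_\rho(G)$. The enlargement step: while there is a vertex $v\notin X_m$ with $d(v,X_m)>m$, move $v$ out of its current class $X_j$ and into $X_m$. After such a move $X_m\cup\{v\}$ is still an $m$-packing (the only new pairs are $\{v,w\}$ with $w\in X_m$, and $d(v,w)\ge d(v,X_m)>m$), and $X_j\setminus\{v\}$ is still a $j$-packing, so we again have a packing colouring. The number of colours used cannot exceed $k$ (we only shuffle vertices among existing classes) and cannot drop below $k$ (since $\chi_\rho(G)=k$ is the minimum), so the result is again a $\chi_\rho(G)$-colouring. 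As $|X_m|$ strictly increases at each step and is bounded by $|V(G)|$, the process terminates in a $\chi_\rho(G)$-colouring in which $X_m$ is a maximal $m$-packing: every $v\notin X_m$ has some $w\in X_m$ with $d(v,w)\le m$.

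Now fix $n$ with $2m\le n\le k$ (for $n>k$ we have $c_n=0$ and there is nothing to prove). Since $m\ge 1$ we have $n>m$, so $X_m$ and $X_n$ are disjoint; hence every $v\in X_n$ lies outside $X_m$ and admits a witness $\varphi(v)\in X_m$ with $d(v,\varphi(v))\le m$. The map $\varphi\colon X_n\to X_m$ is injective: if $\varphi(v)=\varphi(v')=w$ for distinct $v,v'\in X_n$, then $d(v,v')\le d(v,w)+d(w,v')\le m+m=2m\le n$, contradicting that $X_n$ is an $n$-packing. Therefore $c_n=|X_n|\le|X_m|=c_m$, and since $n\ge 2m$ was arbitrary the colouring produced has the required property. (Taking $m=1$ recovers the statement, quoted just before the proposition, that every graph has an optimal colouring with $c_1\ge c_i$ for all $i\ge 2$.)

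The argument is short and I do not expect a genuine obstacle; the only points needing care are the two bookkeeping claims — that the greedy enlargement keeps the colouring a packing colouring and, crucially, does not decrease the number of colours below $\chi_\rho(G)$ — and the injectivity of $\varphi$, which is the heart of the matter and is exactly where the hypothesis $n\ge 2m$ is used.
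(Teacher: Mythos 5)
The paper does not prove Proposition~\ref{pbkr}; it is quoted from the cited reference \cite{bkr}, so there is no in-paper proof to compare against. Your argument is correct and is essentially the standard one from that source: grow $X_m$ greedily into a maximal $m$-packing (noting that no class can be emptied, since shifting the higher-indexed classes down would yield a packing colouring with fewer than $\chi_\rho(G)$ colours), then for each $n\ge 2m$ build a map $X_n\to X_m$ from the maximality witnesses, which the triangle inequality together with $2m\le n$ forces to be injective. No gaps.
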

Note that Proposition \ref{pbkr} above implies that for any graph that is $\chi_\rho$-packable, $c_m \geq c_{2m}$ and $c_m \geq c_{2m+1}$ and $c_m \geq c_{2m+2}$ etc. In particular, for any graph $G$, there exists an optimal colouring in which vertices of colour 1 are the most frequent.

Recall the illustrations of the trees $F_1$, $F_2$ and $F_3$ in Figure \ref{f1f2f3}. Observe that for $F_1$ and $F_3$, $c_1 > c_2 > c_3$ and for $F_2$, $c_1 > c_2 \geq c_3$. Hence, we conclude that $F_i$'s are uniquely $3$-$\chi_\rho$-packable trees with monotone optimal colouring.

Clearly, by iteratively applying any of the operations $\co_i$ defined in \ref{Char3chi} to our $F_i$'s, we have by Lemma \ref{sound} that the resulting trees are uniquely $3$-$\chi_\rho$-packable. We show in the next section that applying some of the operations $\co_i$ to the $F_i$'s, we obtain uniquely $3$-$\chi_\rho$-packable trees, say $T_{\ell,k}$ and $T_k$, with monotone and non-monotone colouring respectively. We begin by labeling the vertices of $F_i$'s, as illustrated in Figure \ref{fig:123}.

\begin{figure}[H]
\begin{tikzpicture}
  [scale=0.5,inner sep=1mm, 
   vertex/.style={circle,thick,draw}, 
   thickedge/.style={line width=2pt}] 
    \begin{scope}[>=triangle 45]
    \node[vertex] (a4) at (0,0) {};
    \node[vertex] (a3) at (2,0) {};
    \node[vertex] (a2) at (4,0)  {};
   \node[vertex] (a1) at (6,0)  {};
   \node[vertex] (b1) at (6,2) {};
     \node[vertex] (c1) at (6,-2){};
      \node[vertex] (c2) at (4,-2)  {};
      
     \draw[black, very thick] (a1)--(a2);  
    \draw[black, very thick] (a2)--(a3);  
    \draw[black, very thick] (a3)--(a4);  
    \draw[black, very thick] (a1)--(b1);  
   \draw[black, very thick] (a1)--(c1);  
    \draw[black, very thick] (a2)--(c2);  
    
    \node [above] at (5.5,0.3) {$u$};
    \node [right] at (6.3,0) {$2$};    
    \node [right] at (6.3,2) {$1$};
   \node [right] at (6.3,-2) {$1$};
   \node [left] at (3.7,-2) {$1$};
   \node [above] at (0,0.3) {$y$};    
    \node [above] at (4,0.3) {$v$};
   \node [above] at (2,0.3) {$x$};
   \node [above] at (0,1.3) {$2$};    
    \node [above] at (4,1.3) {$3$};
   \node [above] at (2,1.3) {$1$};
%
    \node[vertex] (a6) at (12,0) {}; 
    \node[vertex] (a5) at (14,0) {};
    \node[vertex] (a4) at (16,0) {};
    \node[vertex] (a3) at (18,0) {};
    \node[vertex] (a2) at (20,0)  {};
   \node[vertex] (a1) at (22,0)  {};

   \node[vertex] (b1) at (22,2) {};
   
     \node[vertex] (c1) at (22,-2){};
      \node[vertex] (c2) at (16,-2)  {};
      
     \draw[black, very thick] (a1)--(a2);  
    \draw[black, very thick] (a2)--(a3);  
    \draw[black, very thick] (a3)--(a4);
      \draw[black, very thick] (a5)--(a4);
      \draw[black, very thick] (a5)--(a6);
    \draw[black, very thick] (a1)--(b1);  
   \draw[black, very thick] (a1)--(c1);  
    \draw[black, very thick] (a4)--(c2);  
    
    \node [above] at (21.5,0.3) {$u$};
    \node [right] at (22.3,0) {$2$};    
    \node [right] at (22.3,2) {$1$};
   \node [right] at (22.3,-2) {$1$};
   \node [left] at (15.7,-2) {$1$};
   \node [above] at (16,0.3) {$y$}; 
   \node [above] at (16,-3.3) {$z$};   
    \node [above] at (20,0.3) {$v$};
   \node [above] at (18,0.3) {$x$};
   \node [above] at (16,1.3) {$2$};    
    \node [above] at (20,1.3) {$3$};
   \node [above] at (18,1.3) {$1$};
   \node [above] at (14,0.3) {$1$};
   \node [above] at (12,0.3) {$3$};
    \end{scope}
\end{tikzpicture}
\end{figure}

\begin{figure}[H]
\begin{center}
\begin{tikzpicture}
  [scale=0.5,inner sep=1mm, 
   vertex/.style={circle,thick,draw}, 
   thickedge/.style={line width=2pt}] 
    \begin{scope}[>=triangle 45]
     \node[vertex] (a6) at (12,0) {}; 
    \node[vertex] (a5) at (14,0) {};
    \node[vertex] (a4) at (16,0) {};
    \node[vertex] (a3) at (18,0) {};
    \node[vertex] (a2) at (20,0)  {};
   \node[vertex] (a1) at (22,0)  {};

   \node[vertex] (b1) at (22,2) {};
   
     \node[vertex] (c1) at (22,-2){};
     
     \node[vertex]  (d2) at (12,2)  {};
      \node[vertex] (c2) at (12,-2)  {};
      
       \node[vertex]  (d3) at (10,2)  {};
      \node[vertex] (c3) at (10,-2)  {};
      
       \node[vertex]  (d4) at (8,2)  {};
      \node[vertex] (c4) at (8,-2)  {};

     \draw[black, very thick] (a1)--(a2);  
    \draw[black, very thick] (a2)--(a3);  
    \draw[black, very thick] (a3)--(a4);
      \draw[black, very thick] (a5)--(a4);
      \draw[black, very thick] (a5)--(a6);
    \draw[black, very thick] (a1)--(b1);  
   \draw[black, very thick] (a1)--(c1);  
    \draw[black, very thick] (a6)--(c2);  
    \draw[black, very thick] (a6)--(d2);  
    \draw[black, very thick] (c3)--(c2);  
    \draw[black, very thick] (c3)--(c4);
    \draw[black, very thick] (d3)--(d2);  
    \draw[black, very thick] (d3)--(d4);
    
    \node [above] at (21.5,0.3) {$u$};
    \node [right] at (22.3,0) {$2$};    
    \node [right] at (22.3,2) {$1$};
   \node [right] at (22.3,-2) {$1$};
   \node [above] at (16,0.3) {$y$};    
    \node [above] at (20,0.3) {$v$};
   \node [above] at (18,0.3) {$x$};
   \node [above] at (16,1.3) {$2$};    
    \node [above] at (20,1.3) {$3$};
   \node [above] at (18,1.3) {$1$};
   \node [above] at (14,0.3) {$1$};
   \node [above] at (12.5,0.3) {$3$};
   
    \node [above] at (12,2.3) {$1$};    
    \node [above] at (10,2.3) {$2$};
   \node [above] at (8,2.3) {$1$};
   \node [below] at (12,-2.3) {$1$};
   \node [below] at (10,-2.3) {$2$};
   \node [below] at (8,-2.3) {$1$};
    \end{scope}
\end{tikzpicture}
\end{center}
\caption{The trees $F_1$, $F_2$ and $F_3$ from which all uniquely 3-$\chi_\rho$-packable trees are constructed.}
\label{fig:123}
\end{figure}
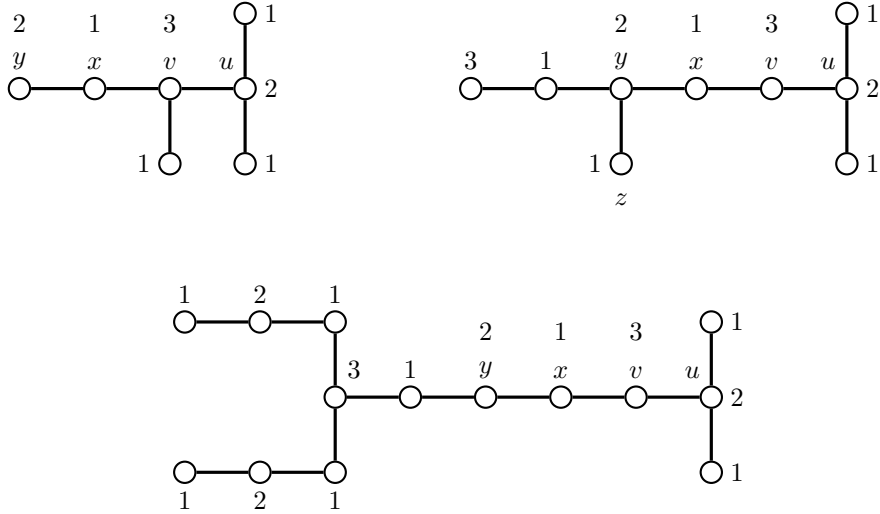
\subsection{Classes of uniquely $3$-$\chi_\rho$-packable trees with monotone and non monotone colouring}
We define a $\tau$-path to be a path of length $\tau$.
Let $T_k$, $k\geq 2$, be a class of trees that consists of a $3$-path on consecutive vertices $u,v,x,y$, each of $u$ and $v$ having two leaves, and there are $k$ paths of length $2$
that emerge from $y$. The tree $T_k$ was first described in \cite{bkr}.  Figure \ref{fig:1321}, depicts the family of trees $T_{k}$ from which a set of uniquely 3-$\chi_\rho$-packable trees with non-monotone colouring is constructed.

\begin{figure}[H]
\begin{center}
\begin{tikzpicture}
  [scale=0.5,inner sep=1mm, 
   vertex/.style={circle,thick,draw}, 
   thickedge/.style={line width=2pt}] 
    \begin{scope}[>=triangle 45]
     
     \node[vertex]  (a1) at (12,2)  {};
     \node[vertex]  (b1) at (10,3)  {};
     \node[vertex]  (c1) at (14,3)  {};
      
      \node[vertex] (a2) at (12,0) {};
       \node[vertex]  (b2) at (10,1)  {};
     \node[vertex]  (c2) at (14,1)  {};
      
       \node[vertex]  (a3) at (12,-2)  {};
     
      \node[vertex] (a4) at (12,-4)  {};
      \node[vertex] (b4) at (10,-3)  {};
      \node[vertex] (e4) at (8, -2)  {};
      \node[vertex] (c4) at (14,-3)  {};
      \node[vertex] (d4) at (16,-2)  {};
     
      \node[vertex] (b6) at (10,-4)  {};
      \node[vertex] (e6) at (8, -4)  {};
      \node[vertex] (c6) at (14,-4)  {};
      \node[vertex] (d6) at (16,-4)  {};
     
      \node[below] at (13.8,-4.2) {$\bullet$};
     \node[below] at (10.2,-4.2) {$\bullet$}; 
     \node[below] at (13.5,-4.5) {$\bullet$};
     \node[below] at (10.5,-4.5) {$\bullet$}; 
     \node[below] at (13.2,-4.8) {$\bullet$};
     \node[below] at (10.8,-4.8) {$\bullet$};
     
     \node[above] at (12,2.2) {$u$};
     \node[right] at (12.1,-0.3) {$v$};
     \node[right] at (12.2,-2) {$x$};
     \node[below] at (12,-4.2) {$y$};
 
    \draw[black, very thick] (a1)--(a2);  
   \draw[black, very thick] (a1)--(b1);  
   \draw[black, very thick] (a1)--(c1);
    \draw[black, very thick] (a2)--(a3);
      \draw[black, very thick] (a2)--(b2);
    \draw[black, very thick] (a2)--(c2);
   \draw[black, very thick] (a3)--(a4);  
    \draw[black, very thick] (a4)--(b4);  
    \draw[black, very thick] (a4)--(c4);  
    \draw[black, very thick] (c4)--(d4);  
    \draw[black, very thick] (b4)--(e4);
     \draw[black, very thick] (a4)--(b6);  
    \draw[black, very thick] (a4)--(c6);  
    \draw[black, very thick] (c6)--(d6);  
    \draw[black, very thick] (b6)--(e6);
    \end{scope}
    
    \begin{scope}[>=triangle 45]
     \node[vertex]  (a1) at (24,2)  {};
     \node[vertex]  (b1) at (22,3)  {};
     \node[vertex]  (c1) at (26,3)  {};
      
      \node[vertex] (a2) at (24,0) {};
       \node[vertex]  (b2) at (22,1)  {};
     \node[vertex]  (c2) at (26,1)  {};
      
       \node[vertex]  (a3) at (24,-2)  {};
     
      \node[vertex] (a4) at (24,-4)  {};
      \node[vertex] (b4) at (22,-3)  {};
      \node[vertex] (e4) at (20, -2)  {};
      \node[vertex] (c4) at (26,-3)  {};
      \node[vertex] (d4) at (28,-2)  {};
     
      \node[vertex] (b6) at (22,-4)  {};
      \node[vertex] (e6) at (20, -4)  {};
      \node[vertex] (c6) at (26,-4)  {};
      \node[vertex] (d6) at (28,-4)  {};

      \node[below] at (25.8,-4.2) {$\bullet$};
     \node[below] at (22.2,-4.2) {$\bullet$}; 
     \node[below] at (25.5,-4.5) {$\bullet$};
     \node[below] at (22.5,-4.5) {$\bullet$}; 
     \node[below] at (25.2,-4.8) {$\bullet$};
     \node[below] at (22.8,-4.8) {$\bullet$};

     \node[above] at (24,2.2) {$2$};
      \node[left] at (21.8,3) {$1$};
       \node[right] at (26.2,3) {$1$};
       
     \node[right] at (24.1,-0.4) {$3$};
       \node[above] at (22,1.2)  {$1$};
     \node[above] at (26,1.2)  {$1$};
     
     \node[left] at (23.8,-2) {$1$};

     \node[below] at (24,-4.2) {$2$};
     
      \node[above]  at (22,-2.8)  {$1$};
      \node[left] at (19.8, -2)  {$3$};
      \node[above] at (26,-2.8)  {$1$};
      \node[right] at (28.2,-2)  {$3$};
     
      \node[below] at (21.8,-4.3)  {$1$};
      \node[left] at (19.8, -4)  {$3$};
      \node[below] at (26.2,-4.3)  {$1$};
      \node[right] at (28.2,-4)  {$3$};    
    \draw[black, very thick] (a1)--(a2);  
   \draw[black, very thick] (a1)--(b1);  
   \draw[black, very thick] (a1)--(c1);
    \draw[black, very thick] (a2)--(a3);
      \draw[black, very thick] (a2)--(b2);
    \draw[black, very thick] (a2)--(c2);
   \draw[black, very thick] (a3)--(a4);  
    \draw[black, very thick] (a4)--(b4);  
    \draw[black, very thick] (a4)--(c4);  
    \draw[black, very thick] (c4)--(d4);  
    \draw[black, very thick] (b4)--(e4);
     \draw[black, very thick] (a4)--(b6);  
    \draw[black, very thick] (a4)--(c6);  
    \draw[black, very thick] (c6)--(d6);  
    \draw[black, very thick] (b6)--(e6);
\end{scope}  
\end{tikzpicture}
\end{center}
\caption{The tree $T_k$ from which a set of uniquely $3$-$\chi_\rho$-packable trees with non-monotone colouring is constructed.}
\label{fig:1321}
\end{figure}
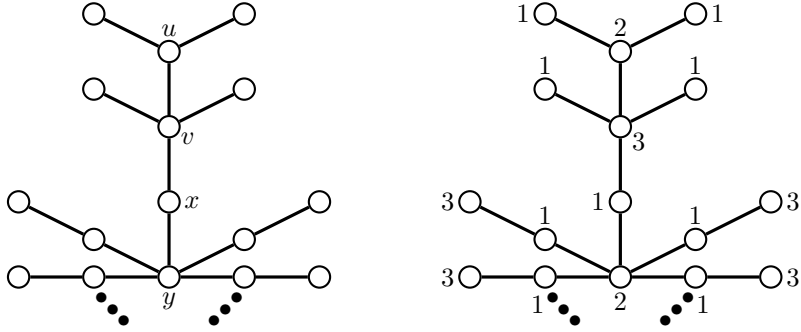

Clearly, $T_k$ is obtained from $F_1$ by applying the operations $\co_i$'s for $i \in \lbrace 1, 2, 4\rbrace$
to the vertices. In particular, $T_k$ is obtained from $F_1$ by applying $\co_2$ to $v$ once, applying $\co_1$ to $y$, $k$-times, and finally applying $\co_4$ to the $k$ vertices adjacent to $y$ respectively. 

Similarly, $T_k$ is obtained from $F_2$ by applying the operations $\co_i$'s for $i \in \lbrace 1, 2, 4 \rbrace$ to vertices $v$ and $y$ in a manner described below. Apply $\co_2$ to $v$ twice, $\co_1$ to $y$, $t$-times such that $t = k - 2$, and finally applying $\co_4$ to $z$ and the newly added $t$ vertices adjacent to $y$   respectively.

By applying similar operation to $F_3$, we obtain another tree, say $F'_3$, such that $T_k \subseteq F'_3$. In particular, $F'_3$ is obtained by applying $\co_2$ to to $v$ twice, applying $\co_1$ to $y$, $t'$-times such that $t'=k-1 \geq 3$, and finally applying $\co_4$ to the newly added $t'$ vertices adjacent to $y$ respectively. Since both $T_k$ and $F'_3$ were obtained from $F_i$'s by applying the operations $\co_1$, $\co_2$ and $\co_4$, then by Lemma \ref{Char3chi}, we have that $F'_3$ is a uniquely $3$-$\chi_\rho$-packable trees but with no monotone colouring. 

The following shows that we can obtain a $3$-$\chi_\rho$-packable tree with monotone colouring from $T_k$ by extending the definitions and applying some of the operations $\co_i$ to some vertices of $T_k$.

For $\ell,k \geq 2$, define $T_{\ell,k}$ to be a class of trees that consists of a $3$-path on consecutive vertices $u,v,x,y$, with vertex $u$ having two leaves, and there are $\ell$ paths and $k$ paths of length $2$ emerging from $v$ and $y$ respectively.  Figure \ref{fig:1234}, shows the family of trees $T_{\ell,k}$ from which a set of uniquely 3-$\chi_\rho$-packable trees with monotone colouring is constructed.


\begin{figure}[H]
\begin{center}
\begin{tikzpicture}
  [scale=0.4,inner sep=1mm, 
   vertex/.style={circle,thick,draw}, 
   thickedge/.style={line width=2pt}] 
    \begin{scope}[>=triangle 45]
     
     \node[vertex]  (a1) at (12,2)  {};
     \node[vertex]  (b1) at (10,3)  {};
     \node[vertex]  (c1) at (14,3)  {};
      
      \node[vertex] (a2) at (12,0) {};
       \node[vertex]  (b2) at (10,1)  {};
     \node[vertex]  (c2) at (14,1)  {};
      \node[vertex] (e2) at (8, 2)  {};
      \node[vertex] (d2) at (16,2)  {};
      
        \node[vertex] (b5) at (10,0)  {};
      \node[vertex] (e5) at (8, 0)  {};
      \node[vertex] (c5) at (14,0)  {};
      \node[vertex] (d5) at (16,0)  {};
      
       \node[vertex]  (a3) at (12,-2)  {};
     
      \node[vertex] (a4) at (12,-4)  {};
      \node[vertex] (b4) at (10,-3)  {};
      \node[vertex] (e4) at (8, -2)  {};
      \node[vertex] (c4) at (14,-3)  {};
      \node[vertex] (d4) at (16,-2)  {};
     
      \node[vertex] (b6) at (10,-4)  {};
      \node[vertex] (e6) at (8, -4)  {};
      \node[vertex] (c6) at (14,-4)  {};
      \node[vertex] (d6) at (16,-4)  {};

     \node[below] at (13.8,-0.2) {$\bullet$};
     \node[below] at (10.2,-0.2) {$\bullet$}; 
     \node[below] at (13.5,-0.5) {$\bullet$};
     \node[below] at (10.5,-0.5) {$\bullet$}; 
     \node[below] at (13.2,-0.8) {$\bullet$};
     \node[below] at (10.8,-0.8) {$\bullet$};

      \node[below] at (13.8,-4.2) {$\bullet$};
     \node[below] at (10.2,-4.2) {$\bullet$}; 
     \node[below] at (13.5,-4.5) {$\bullet$};
     \node[below] at (10.5,-4.5) {$\bullet$}; 
     \node[below] at (13.2,-4.8) {$\bullet$};
     \node[below] at (10.8,-4.8) {$\bullet$};
     
     \node[above] at (12,2.2) {$u$};
     \node[right] at (12.1,-0.3) {$v$};
     \node[right] at (12.2,-2) {$x$};
     \node[below] at (12,-4.2) {$y$};
 
    \draw[black, very thick] (a1)--(a2);  
   \draw[black, very thick] (a1)--(b1);  
   \draw[black, very thick] (a1)--(c1);
    \draw[black, very thick] (a2)--(a3);
      \draw[black, very thick] (a2)--(b2);
    \draw[black, very thick] (a2)--(c2);
     \draw[black, very thick] (c2)--(d2);  
    \draw[black, very thick] (b2)--(e2);  
    \draw[black, very thick] (a2)--(b5);  
    \draw[black, very thick] (a2)--(c5);  
    \draw[black, very thick] (c5)--(d5);  
    \draw[black, very thick] (b5)--(e5);
   \draw[black, very thick] (a3)--(a4);  
    \draw[black, very thick] (a4)--(b4);  
    \draw[black, very thick] (a4)--(c4);  
    \draw[black, very thick] (c4)--(d4);  
    \draw[black, very thick] (b4)--(e4);
     \draw[black, very thick] (a4)--(b6);  
    \draw[black, very thick] (a4)--(c6);  
    \draw[black, very thick] (c6)--(d6);  
    \draw[black, very thick] (b6)--(e6);
    \end{scope}
    
    \begin{scope}[>=triangle 45]
     \node[vertex]  (a1) at (24,2)  {};
     \node[vertex]  (b1) at (22,3)  {};
     \node[vertex]  (c1) at (26,3)  {};
      
      \node[vertex] (a2) at (24,0) {};
       \node[vertex]  (b2) at (22,1)  {};
     \node[vertex]  (c2) at (26,1)  {};
      \node[vertex] (e2) at (20, 2)  {};
      \node[vertex] (d2) at (28,2)  {};
      
        \node[vertex] (b5) at (22,0)  {};
      \node[vertex] (e5) at (20, 0)  {};
      \node[vertex] (c5) at (26,0)  {};
      \node[vertex] (d5) at (28,0)  {};
      
       \node[vertex]  (a3) at (24,-2)  {};
     
      \node[vertex] (a4) at (24,-4)  {};
      \node[vertex] (b4) at (22,-3)  {};
      \node[vertex] (e4) at (20, -2)  {};
      \node[vertex] (c4) at (26,-3)  {};
      \node[vertex] (d4) at (28,-2)  {};
     
      \node[vertex] (b6) at (22,-4)  {};
      \node[vertex] (e6) at (20, -4)  {};
      \node[vertex] (c6) at (26,-4)  {};
      \node[vertex] (d6) at (28,-4)  {};

     \node[below] at (25.8,-0.2) {$\bullet$};
     \node[below] at (22.2,-0.2) {$\bullet$}; 
     \node[below] at (25.5,-0.5) {$\bullet$};
     \node[below] at (22.5,-0.5) {$\bullet$}; 
     \node[below] at (25.2,-0.8) {$\bullet$};
     \node[below] at (22.8,-0.8) {$\bullet$};

      \node[below] at (25.8,-4.2) {$\bullet$};
     \node[below] at (22.2,-4.2) {$\bullet$}; 
     \node[below] at (25.5,-4.5) {$\bullet$};
     \node[below] at (22.5,-4.5) {$\bullet$}; 
     \node[below] at (25.2,-4.8) {$\bullet$};
     \node[below] at (22.8,-4.8) {$\bullet$}; 
   
     \node[above] at (24,2.2) {$2$};
      \node[left] at (21.8,3) {$1$};
       \node[right] at (26.2,3) {$1$};
       
     \node[right] at (24.1,-0.4) {$3$};
       \node[above] at (22,1.2)  {$1$};
     \node[above] at (26,1.2)  {$1$};
      \node[left] at (19.8, 2)  {$2$};
      \node[right] at (28.2,2)  {$2$};

      \node[below] at (21.8,-0.3)  {$1$};
      \node[left] at (19.8, 0)  {$2$};
      \node[below] at (26.2,-0.3)  {$1$};
      \node[right] at (28.2,0)  {$2$};
     
     \node[left] at (23.8,-2) {$1$};

     \node[below] at (24,-4.2) {$2$};
     
      \node[above]  at (22,-2.8)  {$1$};
      \node[left] at (19.8, -2)  {$3$};
      \node[above] at (26,-2.8)  {$1$};
      \node[right] at (28.2,-2)  {$3$};
     
      \node[below] at (21.8,-4.3)  {$1$};
      \node[left] at (19.8, -4)  {$3$};
      \node[below] at (26.2,-4.3)  {$1$};
      \node[right] at (28.2,-4)  {$3$};
      
    \draw[black, very thick] (a1)--(a2);  
   \draw[black, very thick] (a1)--(b1);  
   \draw[black, very thick] (a1)--(c1);
    \draw[black, very thick] (a2)--(a3);
      \draw[black, very thick] (a2)--(b2);
    \draw[black, very thick] (a2)--(c2);
     \draw[black, very thick] (c2)--(d2);  
    \draw[black, very thick] (b2)--(e2);  
    \draw[black, very thick] (a2)--(b5);  
    \draw[black, very thick] (a2)--(c5);  
    \draw[black, very thick] (c5)--(d5);  
    \draw[black, very thick] (b5)--(e5);
   \draw[black, very thick] (a3)--(a4);  
    \draw[black, very thick] (a4)--(b4);  
    \draw[black, very thick] (a4)--(c4);  
    \draw[black, very thick] (c4)--(d4);  
    \draw[black, very thick] (b4)--(e4);
     \draw[black, very thick] (a4)--(b6);  
    \draw[black, very thick] (a4)--(c6);  
    \draw[black, very thick] (c6)--(d6);  
    \draw[black, very thick] (b6)--(e6);
    \end{scope}
\end{tikzpicture}
\end{center}
\caption{The tree $T_{\ell,k}$ from which a set of uniquely 3-$\chi_\rho$-packable trees with monotone colouring is constructed.}
\label{fig:1234}
\end{figure}

Clearly, the tree $T_{\ell,k}$ is an extension of $T_k$, which is obtained by applying some of the operations $\co_i$'s in a manner described below. Apply $\co_2$ to $v$, $t$-times such that $t=\ell-2\geq k-3$, $\co_3$ to all $v'$ in $N[v] - \lbrace u, x \rbrace$.

Since $T_k$ is a uniquely 3-$\chi_\rho$-packable tree and $T_{\ell, k}$ is obtained from $T_{k}$ by applying the operations $\co_i$'s, we have by Lemma  \ref{Char3chi} that $T_{\ell,k}$ is also a uniquely 3-$\chi_\rho$-packable tree. Moreover, since $T_{\ell,k}$ satisfies the conditions of Definition \ref{defbkr}, we conclude that $T_{\ell,k}$ is a set of uniquely 3-$\chi_\rho$-packable trees with monotone colouring.

\section{Conclusion}
In this paper we completely characterised all trees $T$ for which there is only one packing colouring using $\chi_\rho(T) = 3$ colours. We further investigated the monotonicity of the packing colouring and  obtained sets of uniquely $3$-$\chi_\rho$-packable trees with monotone $\chi_{\rho}$-colouring and non-monotone $\chi_{\rho}$-colouring respectively. We now consider the existence of uniquely $k$-$\chi_\rho$-packable trees for $k>3$ which pose an open question
\subsection{Uniquely $k$-$\chi_\rho$-packable trees}\label{Sec4}
We now consider the existence of uniquely $k$-$\chi_\rho$-packable trees for $k>3$. In Figure~\ref{unique37} we give
examples of uniquely $k$-$\chi_\rho$-packable trees for $k=3,\ldots,7$. Here, a circle represents a vertex which has sufficiently many unshown leaf neighbours so as to ensure its degree is at least $k$, so that the vertex cannot receive colour~1 by Observation~\ref{col1}.
Solid vertices have no hidden neighbours. For instance, $T_3$ represents the graph $F_1$ of the previous section. 
We mention that these examples can be adapted to obtain examples of arbitrary diameter.

\begin{figure}[H]
\centering
\includegraphics[scale=0.9]{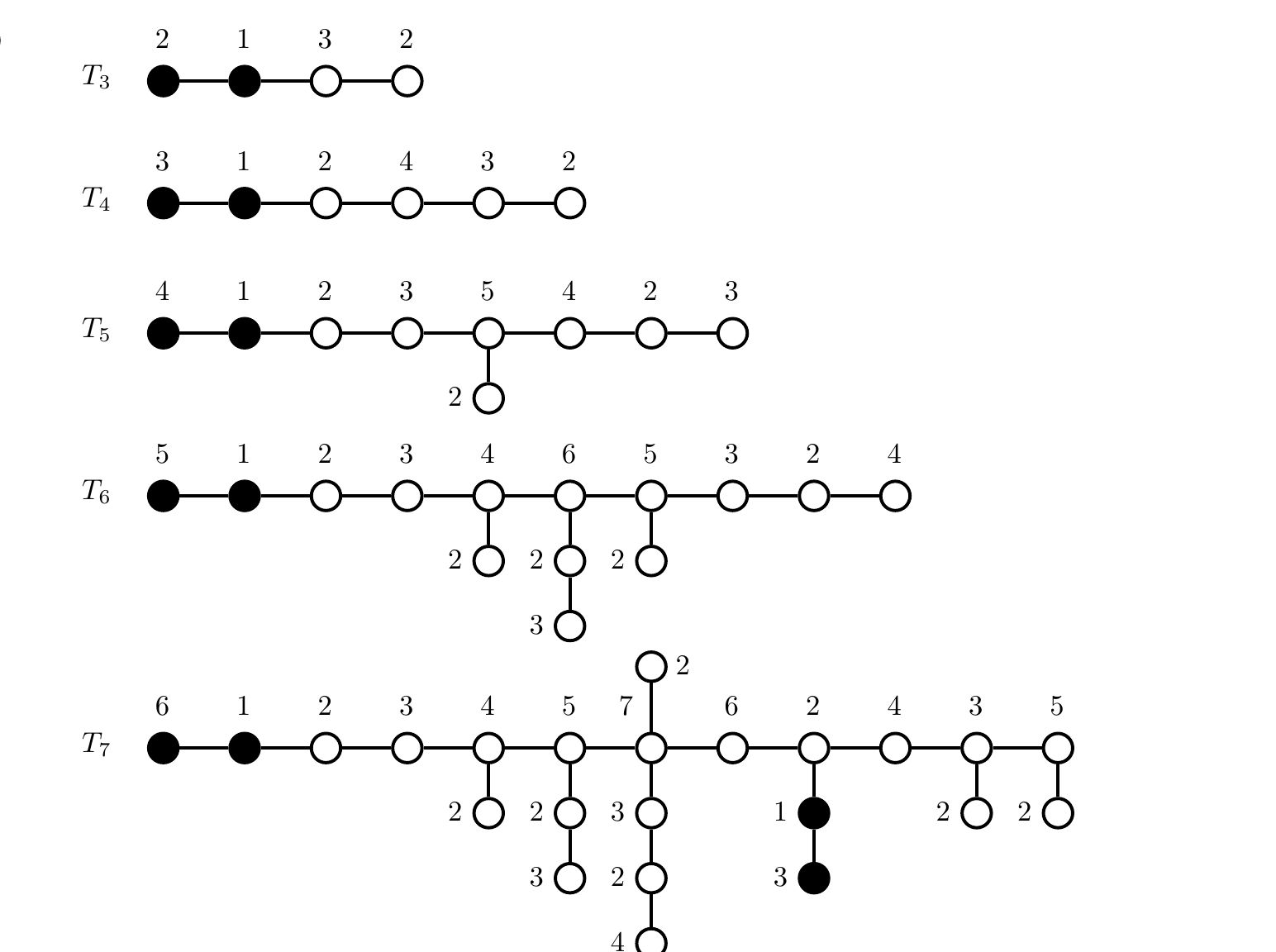}
\caption{Uniquely $k$-$\chi_\rho$-packable trees, $k=3,\ldots,7$.}
\label{unique37}
\end{figure}

These examples were found and verified with the aid of a computer. While a pattern seems to emerge, the approach fails
for $k\geq 8$. In fact, we are not sure that examples exist for all $k$. We will state this as:

\begin{quest}
Do uniquely $k$-$\chi_\rho$-packable trees exist for all $k$?
\end{quest}

Proving uniqueness of the given colourings can be done with a straightforward but tedious case analysis.
We give a proof for $T_6$ to illustrate a useful technique that considerably reduces the amount of work required:

Consider any $6$-$\chi_\rho$-packing $c$ of $T_6$.
Let $F$ be the subgraph of $T_6$ induced by all vertices at distance 2 or less
from the 6-vertex. Two copies of $F$ are depicted in Figure~\ref{t6proof1}.
There can be at most one 4-vertex, at most one 5-vertex and
at most one 6-vertex in $F$, since $F$ has diameter 4. Consider the sets $A_1, A_2, A_3$ and $B_1, B_2,B_3$
indicated in Figure~\ref{t6proof1}. The  $A_i$'s form a partition of $V(F)$ and each $A_i$ has diameter
at most~2. Therefore there are at most three 2-vertices. The $B_i$'s similarly partition
$V(F)$ into sets of diameter at most~3, hence there are at most three 3-vertices.

Since $F$ has nine vertices, it follows that there must be three 2-vertices, one from each $A_i$,
three 3-vertices, one from each $B_i$, and one vertex of each of the colours 4, 5 and 6.
But $B_3$ has only one element, call it $x$, so $c(x)=3$. Since $|A_3-x|=1$, the neighbour of $x$
must have colour 2. Now the vertices of $F$ coloured 4,5 and 6 in Figure~\ref{unique37} must have
those colours, in some order.

\begin{figure}[H]
\centering
\includegraphics[scale=1]{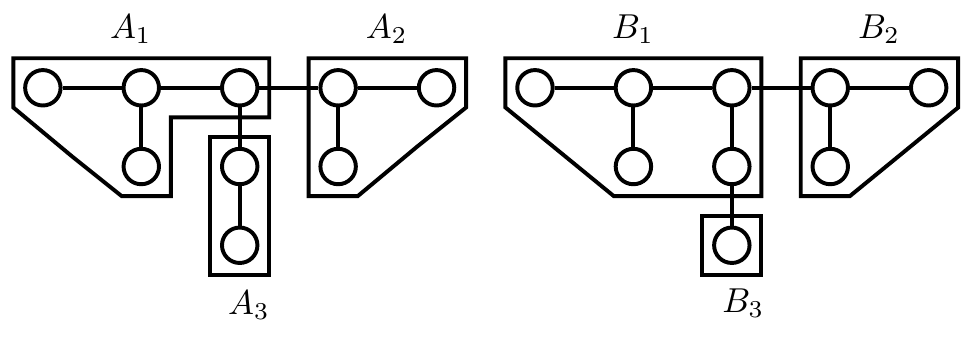}
\caption{The subgraph $F$ of $T_6$ used in the uniqueness proof.}
\label{t6proof1}
\end{figure}

\begin{figure}[H]
\centering
\includegraphics[scale=1]{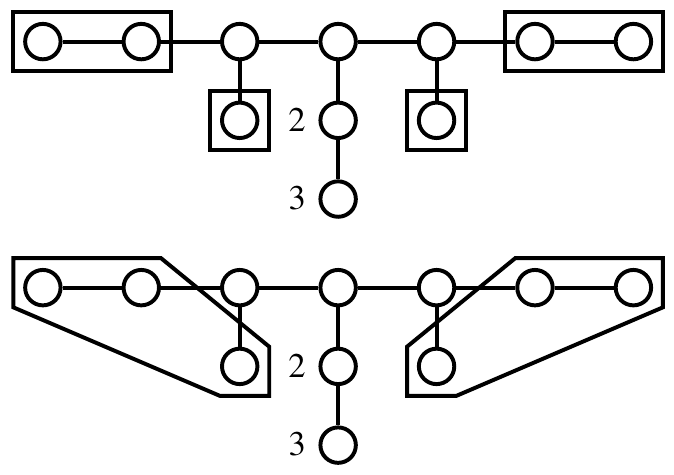}
\caption{The subgraph $F'$ of $T_6$.}
\label{t6proof2}
\end{figure}

Now consider the subgraph $F'$ of $T_6$ induced by all vertices at distance 3 or less
from the 6-vertex, and consider the subsets indicated in Figure~\ref{t6proof2}.
Each of the four sets at the top must contain a 2-vertex, and each of the two sets at the
bottom must contain a 3-vertex. From this the positions of these vertices follow easily.

Considering all of $T_6$ it follows that there must be two 4-vertices and two 5-vertices,
and there is only one way to place these and complete the colouring. \qed

We have no conflicts of interest.
No data was used for the research described in the article.
\section*{Acknowledgements}
Financial support by the DSI-NRF Centre of Excellence in Mathematical and Statistical Sciences (CoE-MaSS), South Africa is gratefully acknowledged.

\end{document}